\documentclass{amsart}

\newcommand{\apref}[3]{\hyperref[#2]{#1\ref*{#2}#3}}


\usepackage{enumerate}
\usepackage[latin1]{inputenc}
\usepackage{dsfont}
\usepackage{amssymb,amsthm,amsmath}

\input{xy}
\xyoption{all}

\usepackage{mathrsfs}

\theoremstyle{plain}
\newtheorem{prop}{Proposition}[section]

\newtheorem{thm}[prop]{Theorem}

\newtheorem*{thmAnn}{Theorem A}

\theoremstyle{definition}

\newtheorem{example}[prop]{Example}

\theoremstyle{remark}
\newtheorem{remark}[prop]{Remark}

\setlength{\parindent}{0pt}
\setlength{\parskip}{1ex}




\DeclareMathOperator{\PSL}{PSL}


\DeclareMathOperator{\Ima}{Im}
\DeclareMathOperator{\Rea}{Re}





\DeclareMathOperator{\Stab}{Stab}



\DeclareMathOperator{\Ext}{ext}

\DeclareMathOperator{\cyl}{cyl}

\newcommand{\pch}{\mc A}
\newcommand{\fpch}{\mathbb A}

\newcommand{\choices}{\mathbb S}
\newcommand{\shmap}{\mathbb T}

\newcommand{\dec}{\text{dec}}
\newcommand{\parab}{\text{par}}


\newcommand\N{\mathbb{N}}

\newcommand\R{\mathbb{R}}
\newcommand\Z{\mathbb{Z}}
\newcommand\C{\mathbb{C}}

\newcommand{\h}{\mathbb{H}}

\newcommand{\mc}[1]{\mathcal #1}

\newcommand{\wt}{\widetilde}
\newcommand{\wh}{\widehat}

\newcommand{\eps}{\varepsilon}




\DeclareMathOperator{\FE}{FE}


\DeclareMathOperator{\id}{id}

\DeclareMathOperator{\Fct}{Fct}
\DeclareMathOperator{\MCF}{MCF}

\newcommand{\sceq}{\mathrel{\mathop:}=}
\newcommand{\seqc}{\mathrel{=\mkern-4.5mu{\mathop:}}}

\newcommand{\mat}[4]{\begin{pmatrix} #1&#2\\#3&#4\end{pmatrix}}
\newcommand{\bmat}[4]{\begin{bmatrix} #1&#2\\#3&#4\end{bmatrix}}

\newcommand{\textbmat}[4]{\left[\begin{smallmatrix} #1&#2 \\ #3&#4
\end{smallmatrix}\right]}

\usepackage[colorlinks]{hyperref}
\usepackage{graphics}
\usepackage[T1]{fontenc} 

\begin{document}

\title{A dynamical approach to Maass cusp forms}
\author[A.\@ Pohl]{Anke D.\@ Pohl}
\address{Mathematisches Institut, Georg-August-Universit\"at G\"ottingen,  Bunsenstr. 3-5, 37073 G\"ottingen}
\email{pohl@uni-math.gwdg.de}
\subjclass[2010]{Primary: 11F37, 37C30; Secondary: 37B10, 37D35, 37D40, 11F67}
\keywords{Maass cusp forms, transfer operator, period functions, symbolic dynamics, geodesic flow}
\begin{abstract} 
For nonuniform cofinite Fuchsian groups $\Gamma$ which satisfy a certain additional geometric condition, we show that the Maass cusp forms for $\Gamma$ are isomorphic to $1$-eigenfunctions of a finite-term transfer operator. The isomorphism is constructive.
\end{abstract}
\thanks{The author was supported by the ERC Starting Grant ANTHOS}
\maketitle


\section{Introduction}

Let $\Gamma$ be a nonuniform cofinite Fuchsian group and consider its action on the hyperbolic plane $\h$ by M\"obius transformations. The purpose of this article is to characterize, under a certain additional geometric requirement on $\Gamma$, the Maass cusp forms for $\Gamma$ as $1$-eigenfunctions of a finite-term transfer operator which arises from a discretization of the geodesic flow on $\Gamma\backslash\h$.

Maass cusp forms for $\Gamma$ are specific eigenfunctions of the Laplace-Beltrami operator $\Delta$ acting on $L^2(\Gamma\backslash\h)$ which decay rapidly towards any cusp of $\Gamma\backslash\h$. They span the cuspidal spectrum of $\Delta$ in $L^2(\Gamma\backslash\h)$, which together with the residual spectrum spans the discrete spectrum. These forms play important roles in various fields, e.g.\@ harmonic analysis, number theory, mathematical physics and quantum chaos. For lattices with special symmetries (e.g.\@ congruence subgroups), infinitely many linearly independent Maass cusp forms are known to exist, even though only few could be provided by explicit formulas, yet. For generic nonuniform lattices, according to the Phillips-Sarnak conjecture \cite{Phillips_Sarnak_cuspforms, Phillips_Sarnak_weyl}, existence of Maass cusp forms is expected to be very limited.

The transfer operator approach has its origin in thermodynamic formalism and was initially primarily used to investigate dynamical zeta functions \cite{Ruelle_dynzeta, Ruelle}. An important example of such a dynamical zeta function is the Selberg zeta function
\[
 Z(s) \sceq \prod_\gamma \prod_{k=0}^\infty \left( 1- e^{-(s+k)\ell(\gamma)}\right),
\]
converging on some right halfplane. Here, the outer product runs over all primitive periodic geodesics $\gamma$ on $\Gamma\backslash\h$, and $\ell(\gamma)$ denotes the length of $\gamma$. As known from classical Selberg theory, the Selberg zeta function encodes the eigenvalues of Maass cusp forms in some of the zeros of its meromorphic continuation. In terms of quantum chaos, the Selberg zeta function allows to correlate some properties of some quantum objects (eigenvalues of Maass cusp forms) with some properties of purely classical objects (geodesic length spectrum). 

For the modular surface $\PSL_2(\Z)\backslash\h$, a deeper relation has been established by the following transfer operator approach. Mayer \cite{Mayer_zeta, Mayer_thermo, Mayer_thermoPSL} showed that the transfer operator $\mc L_s$ with parameter $s\in\C$ of the Gauss map,
\[
 \mc L_sf(z) = \sum_{n\in\N} \frac{1}{(z+n)^{2s}}f\left(\frac{1}{z+n}\right),
\]
is a nuclear operator of order zero on a certain Banach space of holomorphic functions, its Fredholm determinant represents the Selberg zeta function
\begin{equation}\label{fredrep}
 Z(s) = \det(1-\mc L_s^2) = \det(1-\mc L_s)\det(1+\mc L_s)
\end{equation}
on some right halfplane $\Rea s > s_0$, and the family $(\mc L_s)_{\Rea s>s_0}$ admits a meromorphic continuation to all $s\in\C$. The Gauss map and hence the transfer operator arise from a specific discretization of the geodesic flow on the modular surface \cite{Artin, Series}. The $\pm 1$-eigenfunctions of $\mc L_s$ ($1>\Rea s>0$) are isomorphic to highly regular solutions of the functional equation \cite{Chang_Mayer_transop, Lewis_Zagier}
\begin{equation}\label{classfeq}
 f(x) = f(x+1) + (x+1)^{-2s} f\left(\frac{x}{x+1}\right),\quad x\in \R_{>0}.
\end{equation}
By Lewis-Zagier \cite{Lewis_Zagier} (see \cite{Bruggeman} for an alternative approach, and \cite{Lewis} for an earlier result on even Maass cusp forms), these solutions are isomorphic to the Maass cusp forms for $\PSL_2(\Z)$ with eigenvalue $s(1-s)$. The factorization in \eqref{fredrep} relates to even resp.\@ odd Maass cusp forms (see \cite{Efrat_spectral} for a spectral result prior to \cite{Lewis_Zagier}). Hence, in this approach, Maass cusp forms themselves, not only their eigenvalues, can be recovered from the geodesic flow. In addition, the meromorphic continuation of the Selberg zeta function and the relation between its zeros and the eigenvalues of Maass cusp forms can be rededuced. However, the functional equation of $Z$ could not be found via transfer operators, yet. The solutions to \eqref{classfeq} of sufficient regularity were coined \textit{period functions} in \cite{Lewis_Zagier}, in analogy to period polynomials in the Eichler-Manin-Shimura theory.

In the sequel, also other automorphic forms for $\PSL_2(\Z)$ could be included into this picture \cite{Chang_Mayer_period, Chang_Mayer_transop}.

Up to date, the complete picture from a representation of the Selberg zeta function as a Fredholm determinant of a transfer operator family via period functions to Maass cusp forms could only be extended to some finite index subgroups of $\PSL_2(\Z)$ \cite{Deitmar_Hilgert, Chang_Mayer_eigen, Chang_Mayer_extension, Hilgert_Mayer_Movasati, Fraczek_Mayer_Muehlenbruch}. Moreover, there is a second transfer operator approach to $\PSL_2(\Z)$ \cite{Mayer_Muehlenbruch_Stroemberg, Bruggeman_Muehlenbruch} using the discretization in \cite{Mayer_Stroemberg}. From the point of view of quantum chaos, the most significant component of such a transfer operator approach is to establish that certain eigenfunctions of the transfer operator are isomorphic to Maass cusp forms. The discretization in \cite{Pohl_Symdyn2d} for the geodesic flow on $\Gamma\backslash\h$ for various subgroups $\Gamma$ of $\PSL_2(\R)$ has been developed specifically for such a ``short-cut'' transfer operator approach to Maass cusp forms. Common 
to the approaches mentioned above is that the transfer operators involve infinitely many terms. In contrast, the transfer operators arising from \cite{Pohl_Symdyn2d} are finite-term. More precisely, the transfer operator with parameter $s\in\C$ is given by a finite sum of specific elements of $\Gamma$ acting via the action of principal series representation with spectral parameter $s$ on functions which are defined on certain intervals in the geodesic boundary of $\h$. It is shown in \cite{Moeller_Pohl} for Hecke triangle groups and in \cite{Pohl_mcf_Gamma0p} for $\Gamma_0(p)$, $p$ prime, that the $1$-eigenfunctions of these transfer operators for these lattices are period functions and are isomorphic to Maass cusp forms via an integral transform, taking advantage of the characterization of Maass cusp forms in parabolic $1$-cohomology in \cite{BLZ_part2}. The functional equations replacing \eqref{classfeq} are just the defining equations for $1$-eigenfunctions of these transfer operators. Moreover, for Hecke 
triangle groups, \cite{Moeller_Pohl} shows a representation of the Selberg zeta function as in \eqref{fredrep}, including the factorization, for certain modified transfer operators. In \cite{Pohl_spectral}, a spectral result analogous to \cite{Efrat_spectral} is provided.

Here we generalize the dynamical approach to Maass cusp forms to nonuniform lattices $\Gamma$ in $\PSL_2(\R)$, which satisfy a certain additional geometric requirement. Our main result is as follows:

\begin{thmAnn}
Let $s\in\C$, $0< \Rea s < 1$. Then the space of Maass cusp forms for $\Gamma$ with eigenvalue $s(1-s)$ is isomorphic to the space of sufficiently regular $1$-eigenfunctions of the transfer operator with parameter $s$.
\end{thmAnn}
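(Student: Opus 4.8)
The plan is to follow the now-standard three-step architecture of the dynamical approach, adapting the Hecke-triangle and $\Gamma_0(p)$ arguments of \cite{Moeller_Pohl,Pohl_mcf_Gamma0p} to the present more general setting. First I would set up the symbolic dynamics: using the discretization of the geodesic flow on $\Gamma\backslash\h$ from \cite{Pohl_Symdyn2d}, one produces a (finite) cross section whose first-return map is conjugate to a piecewise-M\"obius map on a finite union of intervals in $\partial\h$; the additional geometric requirement on $\Gamma$ is precisely what guarantees that this cross section is well-defined, has finitely many components, and that all branches of the return map are restrictions of elements of $\Gamma$. From the return map one reads off the transfer operator $\mc L_s$ with parameter $s$ acting by the principal-series action of these finitely many group elements on vector-valued functions on the interval system; the ``sufficiently regular'' $1$-eigenfunctions are those extending holomorphically to suitable neighborhoods (or satisfying the requisite decay/real-analyticity at the parabolic vertices), and the eigenfunction equation $\mc L_s f = f$ unravels into an explicit finite system of functional equations — the analogue of \eqref{classfeq}.

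The second step is the cohomological translation. I would use the characterization of Maass cusp forms for $\Gamma$ in terms of parabolic $1$-cohomology with values in the principal-series module, as developed in \cite{BLZ_part2}: a Maass cusp form with eigenvalue $s(1-s)$ corresponds to a parabolic cocycle on $\Gamma$ with values in the module of functions holomorphic on the lower (or upper) half-plane and with appropriate boundary behavior, via an integral transform built from the Green's form / the automorphic-form-to-cocycle map of Bruggeman--Lewis--Zagier. The key is then to show that the finite system of functional equations characterizing the regular $1$-eigenfunctions of $\mc L_s$ is equivalent to the cocycle condition for a cocycle supported on the specific generators of $\Gamma$ that appear in the transfer operator. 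Concretely, one defines a map sending an eigenfunction $f$ to the tuple of its ``components'' $(c_g)$ indexed by the relevant group elements $g$; the transfer-operator relation forces exactly the relations among the $c_g$ needed to extend them to a genuine $\Gamma$-cocycle, and conversely the restriction of a parabolic cocycle to the interval system yields an eigenfunction. This requires checking compatibility at the parabolic fixed points — matching the regularity condition on $f$ near the vertices of the interval system with the parabolic condition on the cocycle — and verifying that the combinatorics of the cross section realize a presentation of $\Gamma$ (or at least enough relations) so that the finitely many functional equations really do capture all cocycle relations.

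Composing the two equivalences gives the asserted isomorphism, with the integral transform providing the explicit, constructive identification claimed in the abstract; one should also record that the map is linear and that injectivity on both sides follows from the fact that a Maass cusp form is determined by its associated cocycle and a cocycle by its values on generators. The main obstacle I anticipate is the first/combinatorial step rather than the cohomological bookkeeping: establishing that the discretization of \cite{Pohl_Symdyn2d} does produce, for \emph{every} $\Gamma$ satisfying the geometric hypothesis, a finite cross section with the ``strong'' structural properties needed (finitely many branches, all branches in $\Gamma$, the return map reduced/orbit-equivalence-respecting, and no pathologies at the cusps or at the boundary points where intervals meet), and that the resulting relations among branch elements exhaust the cocycle relations. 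In the Hecke-triangle and $\Gamma_0(p)$ cases this was handled by hand using explicit fundamental domains; here it must be extracted from the general geometric condition, and making that condition do all the required work — in particular handling several inequivalent cusps and the interaction of the interval endpoints with elliptic and parabolic points — is where the real content lies. A secondary technical point is pinning down the precise function space (decay versus holomorphy versus real-analyticity across the finitely many interval endpoints) so that ``sufficiently regular'' on the eigenfunction side matches exactly the image of the Maass-cusp-form side under the transform, with no spurious eigenfunctions and no missed forms.
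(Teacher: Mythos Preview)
Your proposal is correct and follows essentially the same architecture as the paper: reduce to the isomorphism between period functions and $H^1_\parab(\Gamma;\mc V_s^{\omega*,\infty})$, then invoke \cite{BLZ_part2}. The paper makes two points more concrete than your sketch: the ``combinatorics realize a presentation of $\Gamma$'' step is carried out via Poincar\'e's Fundamental Polyhedron Theorem applied to the tiling $\fpch$ (so the cocycle relations to be checked are exactly the side-pairing cycle relations), and the ``sufficiently regular'' class is not holomorphic extension but rather real-analyticity on the open intervals together with smooth extension of the glued functions $\psi_{\alpha,f}$ across the endpoints to all of $P^1(\R)$ --- this is what matches the semi-analytic smooth module $\mc V_s^{\omega*,\infty}$ on the cohomology side.
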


The regularity required for the eigenfunctions is specified in Theorem~\ref{finaliso} below. For the proof of Theorem~A we use the characterization of Maass cusp forms in parabolic $1$-cohomology in \cite{BLZ_part2} and show that the para\-bolic $1$-cocycle classes are isomorphic to these highly regular $1$-eigenfunctions of the transfer operator. Both of these isomorphisms are constructive, and hence the isomorphism in Theorem A is so. 

The discretization of the geodesic flow allows for a number of choices, each choice giving rise to a definition of period functions. By Theorem~A, all these spaces of period functions are isomorphic. The precise effect of the choices in the discretization is discussed in Section~\ref{sec_choices} below, where we also provide an explicit formula for the isomorphism between the different spaces of period functions.

In Section~\ref{sec_prelims} below we recall the discretization of the geodesic flow, present the associated transfer operators and the definition of period functions, and provide the necessary background on the parabolic $1$-cohomology characterization of Maass cusp forms. Theorem~A is then proved in Section~\ref{sec_isom}.

We provide an ongoing example to illustrate all constructions. Other examples can be found in \cite{Pohl_Symdyn2d, Hilgert_Pohl, Moeller_Pohl, Pohl_mcf_Gamma0p}.

A representation as in \eqref{fredrep} of the Selberg zeta function as a Fredholm determinant of a transfer operator family (not necessarily with a factorization) could be established for various lattices $\Gamma$ in $\PSL_2(\R)$ \cite{Pollicott, Fried_triangle, Morita_transfer, Chang_Mayer_extension, Mayer_Muehlenbruch_Stroemberg, Moeller_Pohl}, using several discretization for the geodesic flow on $\Gamma\backslash\h$. In some approaches, the representation is not exact. It rather requires a ``correction factor'' which accounts for double-coding in the discretization \cite{Pollicott, Morita_transfer, Mayer_Muehlenbruch_Stroemberg}. We expect that the transfer operator families used in the article at hand allow for a similar modification as in \cite{Moeller_Pohl}, but we leave this for future investigations. 

Concerning period functions, in addition to the already mentioned results from \cite{Deitmar_Hilgert, Chang_Mayer_extension, BLZ_part2, Moeller_Pohl, Pohl_mcf_Gamma0p}, the $1$-eigenfunctions of the (infinite-term) transfer operators for Hecke triangle groups in \cite{Mayer_Muehlenbruch_Stroemberg} are shown there to satisfy finite-term functional equations. However, for Hecke triangle groups other than $\PSL_2(\Z)$, their relation to Maass cusp forms is still an open problem. In Section~\ref{sec_choices} we compare our period functions to these results. 

\textit{Acknowledgment.} The author thanks the referee for helpful comments.

\section{Symbolic dynamics, transfer operators, and period functions}\label{sec_prelims}

This section serves to present the additional geometric condition required of the considered Fuchsian lattices $\Gamma$ and to briefly recall the discretization of the geodesic flow on $\Gamma\backslash\h$ from \cite{Pohl_Symdyn2d} as well as the characterization of Maass cusp forms in parabolic $1$-cohomology from \cite{BLZ_part2}. For proofs we refer to the original articles. Moreover, we provide a definition of period functions.

To simplify the exposition, we use the upper half plane
\[
 \h \sceq \{ z\in \C \mid \Ima z > 0\}
\]
as model for the hyperbolic plane and identify its geodesic boundary with $P^1(\R) \cong \R \cup \{\infty\}$. In this model, the group of orientation-preserving Riemannian isometries on $\h$ can be identified with $\PSL_2(\R)$, whose action on $\h$ is given by fractional linear transformations and extends continuously to $P^1(\R)$. Thus we have
\[
 \bmat{a}{b}{c}{d}.z = 
\begin{cases}
 \frac{az+b}{cz+d} & \text{if $cz+d \not=0$}
\\
\infty & \text{if $cz+d=0$}
\end{cases}
\quad
\text{and}
\quad
\bmat{a}{b}{c}{d}.\infty =
\begin{cases}
\frac{a}{c} & \text{if $c\not=0$}
\\
\infty & \text{if $c=0$}
\end{cases}
\]
for $\textbmat{a}{b}{c}{d}\in\PSL(2,\R)$ and $z\in \h \cup \R$. Throughout let $\Gamma$ be a nonuniform cofinite Fuchsian group and suppose that $\infty$ is a representative of a cusp of $\Gamma\backslash \h$. Then the stabilizer group $\Gamma_\infty = \Stab_\Gamma(\infty)$ of $\infty$ in $\Gamma$ is generated by some element
\[
 T \sceq \bmat{1}{\lambda}{0}{1} \quad\in\Gamma
\]
with $\lambda > 0$. A point in $P^1(\R)$ is called \textit{cuspidal} if it is fixed by a parabolic element in $\Gamma$. We use $S\h$ to denote the unit tangent bundle of $\h$. The action of $\Gamma$ extends to $S\h$. By $\Gamma\backslash\h$ resp.\@ $\Gamma\backslash S\h$ we denote the quotient space of the $\Gamma$-action on $\h$ resp.\@ $S\h$. We remark that we may identify the unit tangent bundle of $\Gamma\backslash\h$ with $\Gamma\backslash S\h$. If $U$ is a subset of $\h$, then we let $\partial U$ denote its boundary. The complement of a set $B$ in a set $A$ is denoted by 
\[
 A\smallsetminus B = \{ a\in A \mid a\notin B\}.
\]
Finally, a smooth function always refers to a $C^\infty$ function.

\subsection{The additional requirement on $\Gamma$}

The additional condition we require to be satisfied by $\Gamma$ is of geometric nature and restricts the admissible boundary structure of the subset of $\h$ which is common to all exteriors of isometric spheres of $\Gamma$. In short, it says that there is a Ford fundamental domain for $\Gamma$ constructed with respect to $\infty$ such that the highest points of all non-vertical bounding complete geodesic segments are contained in the boundary of the fundamental domain but are not intersection points of two non-vertical sides of the fundamental domain.

Let $g=\textbmat{a}{b}{c}{d}\in\Gamma\smallsetminus\Gamma_\infty$. Then the \textit{isometric sphere} of $g$ is the set
\[
 I(g) \sceq \{ z\in\h \mid |cz+d|=1 \}.
\]
It is identical to the complete geodesic segment connecting $\frac{-d-1}{c}$ and $\frac{-d+1}{c}$, or the semi-circle in $\h$ with center $-\frac{d}{c}$ and radius $\frac{1}{|c|}$. The \textit{exterior} of $I(g)$ is
\[
 \Ext I(g) \sceq \{ z\in\h \mid |cz+d|>1\}.
\]
The \textit{summit} of $I(g)$ is the point
\[
 s = -\frac{d}{c} + \frac{i}{|c|} \quad \in\h.
\]
Let 
\[
 \mc K \sceq \bigcap_{g\in\Gamma\smallsetminus\Gamma_\infty} \Ext I(g)
\]
be the common part of all exteriors of the isometric spheres of $\Gamma$. This is a convex subset of $\h$ which contains
\[
 \{ z\in\h \mid \Ima z > y_0\}
\]
for a sufficiently large $y_0>0$ and whose boundary is a locally finite union of geodesic segments which are connected subsets of isometric spheres. An isometric sphere of $\Gamma$ is called \textit{relevant} if it coincides with the boundary of $\mc K$ in more than one point. From now on we impose the following condition on $\Gamma$:
\begin{equation}\label{A}\tag{A}
\begin{minipage}{10cm}
If for $g\in\Gamma\smallsetminus\Gamma_\infty$ the isometric sphere $I(g)$ is relevant, then its summit is contained in $\partial\mc K$ but is not a vertex of $\mc K$.
\end{minipage}
\end{equation}
For $r\in\R$, let $\mc F_\infty(r) \sceq (r, r+\lambda)+i\R_{>0}$. Then 
\[
 \mc F(r) \sceq \mc F_\infty(r) \cap \mc K
\]
is a Ford fundamental domain for $\Gamma$. If we choose for $r$ the center of a relevant isometric sphere, then $\mc F(r)$ is a fundamental domain as described above.

\begin{example}
Condition~(A) is satisfied by several Fuchsian lattices. One such lattice is provided in Example~\ref{ex_Gamma}, which we will use as an ongoing example for all constructions. Other lattices satisfying (A) are  e.g.\@ Hecke triangle groups and various congruence subgroups. In the following we present a lattice which does not satisfy (A). 

Let $\Lambda$ be the lattice in $\PSL_2(\R)$ which is generated by the three elements
\[
 \mat{1}{\frac{17}{11}}{0}{1},\quad \mat{18}{-5}{11}{-3} \quad\text{and}\quad \mat{3}{-1}{10}{3}.
\]
A Ford fundamental domain for $\Lambda$ is displayed in Figure~\ref{NA_funddom}. We refer to \cite{Pohl_Symdyn2d} for proofs. It clearly shows that (A) does not hold for $\Lambda$. 
 \begin{figure}[h]
\begin{center}
\includegraphics{vulakh_mod.1}
\end{center}
\caption{Ford fundamental domain for $\Lambda$}
\label{NA_funddom}
\end{figure}
\end{example}

\subsection{Discretization}\label{sec_discr}

The starting point in \cite{Pohl_Symdyn2d} for the discretization of the geodesic flow on $\Gamma\backslash\h$ is a specific choice of a cross section in the sense of Poincar\'e for this flow. This cross section is a subset $\wh C$ on the unit tangent bundle $\Gamma\backslash S\h$ of $\Gamma\backslash \h$ which is intersected by almost all geodesics infinitely often in the past and the future, and each such intersection is discrete in time. Geodesics are here parametrized by arc length, and ``almost all'' refers to all geodesics which do not converge to a cusp forward or backward in time. Before we expound the construction of $\wh C$ in Section~\ref{sec_cross} below, we briefly explain how it gives rise to a discrete dynamical system on subsets of $\R$.

For $\wh v\in \wh C$ let $\wh \gamma_v$ denote the geodesic on $\Gamma\backslash\h$ determined by 
\[
 \frac{d}{dt}\vert_{t=0} \wh\gamma_v(t) = \wh v.
\]
The choice of $\wh C$ yields that if $\wh\gamma_v$ does not converge to a cusp, there is a minimal return time of $\wh\gamma_v$ to $\wh C$, that is  a minimal time $t(\wh v) > 0$ such that 
\[
 \frac{d}{dt}\vert_{t=t(\wh v)} \wh\gamma_v(t) \in \wh C.
\]
Therefore $\wh C$ induces the (partially defined) first return map
\[
 \mc R \colon \wh C \to \wh C,\quad \wh v \mapsto \frac{d}{dt}\vert_{t=t(\wh v)} \wh\gamma_v(t).
\]
The precise domain of definition for $\mc R$ is discussed in detail in \cite{Pohl_Symdyn2d}. The sole property of this domain used hiddenly here is its density in $\wh C$. 

A major property of the cross section $\wh C$ is that it has a set of representatives $C'$ in $S\h$ which decomposes (uniquely) into a finite number $C'_1,\ldots, C'_k$ of subsets each of which is either of the form
\[
 C'_j = \left\{ X\in S\h \left\vert\ X = a\frac{\partial}{\partial x}\vert_{r_j + iy} + b \frac{\partial}{\partial y}\vert_{r_j+iy},\ a>0,\ b\in\R,\ y>0\right.\right\}
\]
for some cuspidal point $r_j\in\R$, or
\[
 C'_j = \left\{ X\in S\h \left\vert\ X = a\frac{\partial}{\partial x}\vert_{r_j+iy} + b\frac{\partial}{\partial y}\vert_{r_j+iy},\ a<0,\ b\in\R,\ y>0\right.\right\}
\]
for some cuspidal point $r_j\in\R$. In other words, each $C'_j$ consists of the unit tangent vectors in $S\h$ which are based on the complete geodesic segment $(r_j,\infty)$ and point into either of the halfspaces $\{ \Rea z > r_j\}$ or $\{\Rea z < r_j\}$. For a given subset $U$ of $\h$ and a unit tangent vector $v\in S\h$, we say that $v$ \textit{points into} $U$ if the geodesic $\gamma_v$ determined by 
\[
 \frac{d}{dt}\vert_{t=0} \gamma_v(t) = v
\]
immediately runs into $U$. More precisely, if there exists $\eta>0$ such that $\gamma_v( (0,\eta) ) \subseteq U$.

The combination of this partition of $C'$ and the first return map $\mc R$ allows to define a discrete dynamical system on parts of the geodesic boundary of $\h$ which is conjugate to $\mc R$. To that end, let
\[
 \wt D_j \sceq \{ (\gamma_v(-\infty), \gamma_v(+\infty), j) \mid v\in C'_j\} \quad \text{for $j=1,\ldots, k$,}
\]
and
\[
 \wt D \sceq \coprod_{j=1}^k \wt D_j.
\]
Then the map $\tau \colon \wh C \to\wt D$ defined by 
\[
 \tau(\wh v) \sceq (\gamma_v(-\infty), \gamma_v(+\infty), j) \quad\text{if $\wh v$ is represented by $v\in C'_j$}
\]
is a bijection. Thus, there is a unique (partially defined) self-map $\wt F$ on $\wt D$ which is conjugate to $\mc R$ via $\tau$. Its domain of definition corresponds to the domain of definition of $\mc R$. 

For each $j\in\{1,\ldots,k\}$, the structure of $C'_j$ yields that $\wt D_j$ is either of the form
\[
 (-\infty, r_j) \times (r_j,\infty) \times \{j\} \quad\text{or}\quad (r_j,\infty)\times (-\infty,r_j)\times \{j\}.
\]
As shown in \cite{Pohl_Symdyn2d}, the map $\wt F$ is locally given by M\"obius transformations of specific elements from $\Gamma$.

From the discrete dynamical system $(\wt D, \wt F)$ we will only need its expanding direction, which means its projection to the last two components. We denote this restricted discrete dynamical system with $(D,F)$, where
\[
 D\sceq \coprod_{j=1}^k D_j,\quad D_j \sceq \{ (\gamma_v(\infty), j) \mid v\in C'_j\}
\]
and $F$ is the self-map of $D$ which is induced by $\wt F$. To be precise, $F$ is a self-map only on
\[
 D\smallsetminus \{ (r,j) \mid \text{$r$ cuspidal, $j=1,\ldots, k$}\}
\]
and can be analytically extended to a map defined on $D$ up to finitely many points. Here we work with this analytic extension and still write $D$ for its domain of definition. It will always be clear on which points of $D$ the map $F$ is not defined.

\subsection{Cross section and choice of set of representatives}\label{sec_cross}

For the definition of the cross section $\wh C$ and a choice of its set $C'$ of representatives we consider $\mc K$ as a subset of $\h \cup P^1(\R)$. 

The vertices of $\mc K$ which are contained in $\h$ are called \textit{inner vertices}, those which are contained in $\R$ are called \textit{infinite vertices}. We decompose its closure $\overline{\mc K}$ as follows into a collection of hyperbolic triangles and rectangles. If $v\not=\infty$ is a vertex of $\mc K$, then $v$ is either the intersection point or the common endpoint of two (uniquely determined) relevant isometric spheres. Let $s_1$ resp.\@ $s_2$ be their summits.  If $v$ is an inner vertex, then we form the hyperbolic rectangle with vertices $\infty$, $s_1$, $v$ and $s_2$. 
If $v$ is an infinite vertex, then we form the two hyperbolic triangles with vertices $\infty$, $v$ and $s_1$ resp.\@  $\infty$, $v$ and $s_2$. In any case, if a side of these triangles and rectangles has $\infty$ as one endpoint, then we call it \textit{vertical}, otherwise \textit{non-vertical}. Let $\mc C$ denote this collection of hyperbolic triangles and rectangles. 

Let $\wt C$ be the set of unit tangent vectors $X \in S\h$  such that $X$ is based on a vertical side of an element in $\mc C$ but is not tangent to this side. Further let $\pi\colon S\h\to \Gamma\backslash S\h$ denote the quotient map. Then we choose
\[
 \wh C \sceq \pi(\wt C) 
\]
as cross section for the geodesic flow on $\Gamma\backslash S\h$. To find a set of representatives for $\wh C$ with the properties announced in Section~\ref{sec_discr} we proceed as follows. 

The elements of $\mc C$ (now considered as subsets of $\h$) provide a tesselation of $\h$. This means their $\Gamma$-translates cover $\h$ and, whenever two $\Gamma$-translates of elements in $\mc C$ have a point in common, then it is either a single point which is a common vertex of both translates or they coincide at a common side or they are equal. Out of the family $\mc C$ we pick a subfamily $\fpch$ of triangles and rectangles whose union forms a (closed) fundamental region for $\Gamma$ in $\h$. Within the family $\fpch$, the tesselation property induces a unique and well-defined side-pairing. In analogy with Poincar\'e's Fundamental Polyhedron Theorem we use this side-pairing to define cycles as explained in the following. For this we remark that non-vertical sides of rectangles (resp.\@ triangles) in $\fpch$ can only be paired with non-vertical sides of rectangles (resp.\@ triangles).

Let $\pch\in\fpch$ be a rectangle. Suppose that $v$ is the inner vertex of $\mc K$ to which $\pch$ is associated, and let $b_1,b_2$ denote the two non-vertical sides of $\pch$. We denote by $k_1(\pch), k_2(\pch)$ the two elements in $\Gamma\smallsetminus\Gamma_\infty$ such that $b_j\in I(k_j(\pch))$ and $k_j(\pch)b_j$ is a non-vertical side of some rectangle in $\fpch$. These are the side-pairing elements for the non-vertical sides of $\pch$. We define $\pch(v)\sceq \pch$. To any rectangle $\pch=\pch(v) \in \fpch$ and any choice $h\in \{k_1(\pch), k_2(\pch)\}$ we assign a finite sequence in $\fpch\times\Gamma$ using the following algorithm:

Set $v_1\sceq v$, $\pch_1\sceq \pch(v_1)$, $h_1\sceq h$, $g_1\sceq\id$ and $g_2 \sceq h_1$. Iteratively for $j=2,3,\ldots$ set $v_j\sceq g_j(v)$ and $\pch_j\sceq \pch(v_j)$. Let $h_j$ be the element in $\Gamma\smallsetminus\Gamma_\infty$ such that $\{h_j,h_{j-1}^{-1}\} = \{k_1(\pch_j),k_2(\pch_j)\}$. Set $g_{j+1} \sceq h_jg_j$. The algorithm stops if $g_{j+1} = \id$. We assign to $(\pch_1,h_1)$ the sequence (the \textit{cycle}) $\big( (\pch_j,h_j) \big)_{j=1,\ldots,k}$ where $k>2$ is minimal such that $g_{k+1} = \id$. 

We consider the two sequences determined by $(\pch, k_1(\pch))$ and $(\pch, k_2(\pch))$ as equivalent, as well as any sequences determined by any element $(\pch', h')$ of these sequences. 

Let $\pch\in\fpch$ be a triangle and let $b$ be its non-vertical side. Then there are unique elements $h\in\Gamma\smallsetminus\Gamma_\infty$ and $\pch'\in\fpch$ such that $b\in I(h)$ and $hb$ is the non-vertical side of $\pch'$. We assign to $(\pch,h)$ the \textit{cycle} $\big( (\pch, h), (\pch',h^{-1})\big)$, which we consider to be equivalent to the cycle $\big( (\pch',h^{-1}), (\pch,h)\big)$.

For any of these cycles in $\fpch\times\Gamma$ we call any element $(\pch,h)$ contained in it a \textit{generator} of the sequence or its equivalence class. To define a set of representatives $C'$ for $\wh C$ we fix a generator for each equivalence class of cycles. Let $\choices$ denote the set of chosen generators.

Let $(\pch,h)$ be an element of a cycle in $\fpch\times\Gamma$. Then one of the vertical sides of $\pch$ is contained in the geodesic segment $(h^{-1}.\infty,\infty)$. We define
\[
 \eps(\pch,h)\sceq
\begin{cases}
 +1 & \text{if $\pch \subseteq \{ z\in\h \mid \Rea z \geq h^{-1}.\infty\}$,}
\\
-1 & \text{if $\pch\subseteq \{ z\in\h \mid \Rea z \leq h^{-1}.\infty\}$.}
\end{cases}
\]
Let $(\pch,h)\in\choices$. Suppose first that $\pch$ is a rectangle. Let $\big( (\pch_j,h_j)\big)_{j=1,\ldots,k}$ be the cycle in $\fpch\times\Gamma$ determined by $(\pch,h)$. Let
\[
 \cyl(\pch)\sceq \min\big(\{\ell\in \{1,\ldots, k-1\} \mid \pch_{\ell+1} = \pch\} \cup \{k\}\big).
\]
For $j=1,\ldots,\cyl(\pch)$ we set
\[
 C'_{(\pch,h),j} \sceq 
\left\{ a\frac{\partial}{\partial x}\vert_{h_j^{-1}.\infty + iy} + b\frac{\partial}{\partial y}\vert_{h_j^{-1}.\infty +iy} \in S\h\ \left\vert\ \eps_j\cdot a>0,\ b\in\R,\ y>0 \vphantom{\frac{\partial}{\partial x}\vert_{h_j^{-1}.\infty + iy}} \right.\right\} 
\]
where $\eps_j \sceq \eps(\pch_j,h_j)$.
Suppose now that $\pch$ is a triangle and let $\big( (\pch,h), (\pch',h^{-1})\big)$ be the cycle determined by $(\pch,h)$. Let $v, v'\in\R$ be the infinite vertices of $\mc K$ to which $\pch$ resp.\@ $\pch'$ are associated. Choose an integer $m=m(\pch,h)\in\Z$ and set $\eps\sceq \eps(\pch,h)$. We define
\begin{align*}
C'_{(\pch,h),1} & \sceq \left\{ a\frac{\partial}{\partial x}\vert_{v + iy} + b\frac{\partial}{\partial y}\vert_{v +iy} \in S\h\ \left\vert\ \eps\cdot a<0,\ b\in\R,\ y>0 \vphantom{\frac{\partial}{\partial x}\vert_{h_j^{-1}.\infty + iy}} \right.\right\}, 
\\
C'_{(\pch,h),2} & \sceq \left\{ a\frac{\partial}{\partial x}\vert_{v' + iy} + b\frac{\partial}{\partial y}\vert_{v' +iy} \in S\h\ \left\vert\ \eps\cdot a>0,\ b\in\R,\ y>0 \vphantom{\frac{\partial}{\partial x}\vert_{h_j^{-1}.\infty + iy}} \right.\right\}, 
\\
C'_{(\pch,h),3} & \sceq \left\{ a\frac{\partial}{\partial x}\vert_{T^mh^{-1}.\infty + iy} + b\frac{\partial}{\partial y}\vert_{T^mh^{-1}.\infty +iy} \in S\h\ \left\vert\ \eps\cdot a>0,\ b\in\R,\ y>0 \vphantom{\frac{\partial}{\partial x}\vert_{h_j^{-1}.\infty + iy}} \right.\right\}. 
\end{align*}
The choice of the integer $m$ will affect the subsequent steps. We record it with the map $\shmap\colon (\pch,h) \mapsto m(\pch,h)$ defined on the elements $(\pch,h)\in\choices$ for which $\pch$ is a triangle. Then
\[
 C'\sceq \bigcup_{\stackrel{(\pch,h)\in\choices}{\text{$\pch$ rectangle}}} \bigcup_{j=1}^{\cyl(\pch)} C'_{(\pch,h),j} \cup \bigcup_{\stackrel{(\pch,h)\in\choices}{\text{$\pch$ triangle}}} \bigcup_{j=1}^{3} C'_{(\pch,h),j}
\]
is a set of representatives for $\wh C$ with the properties described in Section~\ref{sec_discr}.

\begin{example}\label{ex_Gamma}
Let 
\[
 g\sceq \mat{1+\sqrt{2}}{-\frac13(2+\sqrt{2})}{3}{-1}\quad\text{and}\quad T\sceq \mat{1}{\frac13(2+\sqrt{2})}{0}{1}.
\]
We consider the lattice $\Gamma$ in $\PSL_2(\R)$, which is generated by $g$ and $T$. A presentation of $\Gamma$ is e.g.\@ $\Gamma=\langle g,T \mid g^4 = \id\rangle$. This lattice satisfies condition (A). Figure~\ref{ex_funddom} shows a Ford fundamental domain for $\Gamma$ together with its tiling into hyperbolic triangles and rectangles as described above. 
\begin{figure}[h]
\begin{center}
\includegraphics{example.1}
\end{center}
\caption{Fundamental domain for $\Gamma$ with $\fpch = \{\pch_1,\pch_2,\pch_3\}$}
\label{ex_funddom}
\end{figure}
The side $s_1$ is mapped to $s_2$ by $T$, and the side $s_3$ is mapped to $s_4$ by $g$. This lattice has two cusps, represented by $\infty$ and $0$, and a single elliptic point (represented by the only inner vertex), which is of order $4$. We choose $\fpch = \{\pch_1,\pch_2,\pch_3\}$. Then representatives of the equivalence classes of cycles in $\fpch\times\Gamma$ are given by
\[
 \left( (\pch_1,g), (\pch_3,g^{-1}) \right) \quad\text{and}\quad \left( (\pch_2,g),(\pch_2,g), (\pch_2,g), (\pch_2,g)\right).
\]
We remark that $\cyl(\pch_2) = 1$. We choose $\choices = \{ (\pch_1,g), (\pch_2,g) \}$ and $\shmap\colon(\pch_1,g)\mapsto 0$. Figure~\ref{ex_forward} indicates the induced set of representatives for the cross section.
\end{example}

\subsection{The induced discrete dynamical system $(D,F)$}

Let 
\begin{align*}
 \Sigma &\sceq \left\{ \big((\pch,h),j\big) \left\vert\  \text{$(\pch,h)\in\choices$, $\pch$ rectangle, $j=1,\ldots,\cyl(\pch)$}\vphantom{\big((\pch,h),j\big)}\right.\right\} 
\\
& \qquad \cup \left\{ \big((\pch,h),j\big) \left\vert\ \text{$(\pch,h)\in\choices$, $\pch$ triangle, $j=1,2,3$}\vphantom{\big((\pch,h),j\big)}\right.\right\}
\end{align*}
denote the arising \textit{set of symbols}. Then
\[
 C' = \coprod_{\alpha\in\Sigma} C'_\alpha.
\]
We call the sets $C'_\alpha$, $\alpha\in\Sigma$, the \textit{components} of $C'$.

To simplify notations, we use the following conventions. For $r\in\R$ and $\eps \in \{\pm 1\}$ we let
\[
\langle r,\eps\infty\rangle \sceq
\begin{cases}
(r,\infty) & \text{if $\eps = +1$}
\\
(-\infty,r) & \text{if $\eps=-1$.}
\end{cases}
\]

Let $\alpha\in\Sigma$. Recall that $C'_\alpha$ is based on the complete geodesic segment $(r_\alpha,\infty)$ for a cuspidal point $r_\alpha\in\R$. We define
\[
 \eps_\alpha \sceq 
\begin{cases}
+1 & \text{if the elements of $C'_\alpha$ point into $\{\Rea z > r_\alpha\}$},
\\
-1 & \text{if the elements of $C'_\alpha$ point into $\{\Rea z < r_\alpha\}$}.
\end{cases}
\]
Further, we let
\[
 I_\alpha \sceq \langle r_\alpha, \eps_\alpha\infty\rangle \quad\text{and}\quad D_\alpha \sceq I_\alpha\times \{\alpha\}.
\]
Then we have
\[
 D = \coprod_{\alpha\in\Sigma} D_\alpha.
\]
An explicit expression for the map $F\colon D\to D$ can be deduced as follows. Given a point $(r,\alpha) \in D_\alpha$ for some $\alpha\in\Sigma$ we pick any element $v\in C'_\alpha$ such that $\gamma_v(\infty) = r$. Let $\wh\gamma_v = \Gamma.\gamma_v$ be the corresponding geodesic on $\Gamma\backslash\h$ and let $t_0$ be the first return time of $\wh\gamma_v$ to $\wh C$. Then $\gamma'_v(t_0)$ is contained in a (unique) $\Gamma$-translate of some component of $C'$, say $\gamma'_v(t_0) \in g. C'_\beta$. Thus, 
\[
 F(r,\alpha ) = (g^{-1}.r,\beta).
\]
The exact values for $g$ and $\beta$ can be algorithmically calculated from the side-pairing in $\fpch$. The outcome is that $F$ restricts to a finite number of local diffeomorphisms of the form
\[
 \big(I_\alpha \cap g.I_\beta\big) \times \{\alpha\} \to I_\beta \times \{\beta\},\quad (r,\alpha) \mapsto (g^{-1}.r,\beta).
\]
For details we refer to \cite{Pohl_Symdyn2d}.

\begin{example}\label{ex_Gamma_forward}
For the lattice $\Gamma$ in Example~\ref{ex_Gamma} with the choices of $\fpch$, $\choices$ and $\shmap$ done there, we have
\[
 \Sigma = \big\{ \left( (\pch_1,g), 1\right), \left( (\pch_1,g),2\right), \left( (\pch_1,g),3\right), \left( (\pch_2,g),1\right) \big\}.
\]
Moreover, we have
\begin{align*}
D_{(\pch_1,g),1} &= (0,\infty) \times \big\{ \left( (\pch_1,g),1\right)\big\},
\\
D_{(\pch_1,g),2} &= \left(-\infty, \tfrac13(2+\sqrt{2})\right) \times \big\{\left( (\pch_1,g),2\right)\big\},
\\
D_{(\pch_1,g),3} &= \left(-\infty,\tfrac13\right) \times \big\{ \left( (\pch_1,g),3\right)\big\},
\\
D_{(\pch_2,g),1} &= \left(\tfrac13,\infty\right) \times \big\{ \left( (\pch_2,g),1\right) \big\}.
\end{align*}
The local diffeomorphisms which define the induced discrete dynamical system $(D,F)$ can easily be read off from Figure~\ref{ex_forward}, where
\[
 k_1 \sceq g^{-1}T = \mat{1}{0}{3}{1} \quad\text{and}\quad k_2 \sceq gT^{-1} = \mat{1+\sqrt{2}}{-2-\frac43\sqrt{2}}{3}{-3-\sqrt{2}}.
\]
For example, we have
\[
 \left(0,\tfrac13\right)\times \left\{ \left( (\pch_1,g), 1\right)\right\} \to D_{(\pch_1,g),1},\quad \big( r, ( (\pch_1,g),1 )\big) \mapsto \big( k_1^{-1}.r, ( (\pch_1,g),1 )\big)
\]
and
\[
 \left(\tfrac13,\infty\right)\times \left\{ \left( (\pch_1,g),1 \right) \right\} \to D_{(\pch_2,g),1},\quad \big( r, ( (\pch_1,g),1 )\big) \mapsto \big( r, ( (\pch_2,g),1 )\big).
\]
\begin{figure}[h]
\begin{center}
\includegraphics{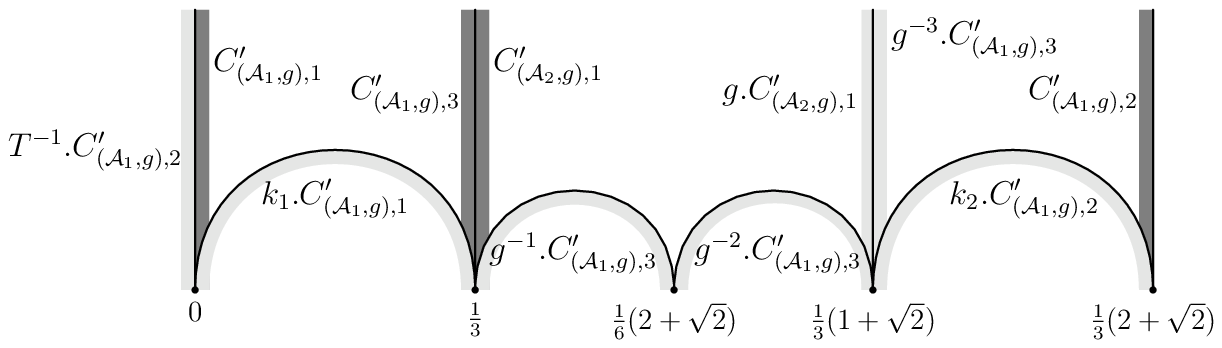}
\end{center}
\caption{Set of representatives for cross section (gray) and forward intersections (light gray)}
\label{ex_forward}
\end{figure}
\end{example}

\subsection{The associated family of transfer operators}
For each $s\in\C$, the transfer operator with parameter $s$ associated to the discrete dynamical system $(D,F)$ is the operator 
\[
 \left(\mc L_{F,s} f\right)(x) \sceq \sum_{y\in F^{-1}(x)} \frac{f(y)}{|F'(y)|^s}
\]
defined on the space $\Fct(D;\C)$ of complex-valued functions on $D$. In this section we provide a matrix representation for $\mc L_{F,s}$.

For any function $\varphi\colon V\to \R$ on some subset $V$ of $\R$ and for any $g\in\PSL_2(\R)$ we set
\begin{equation}\label{tau}
 \big(\tau_s(g^{-1})\varphi\big)(r) \sceq \big( g'(r) \big)^s \varphi(g.r)
\end{equation}
whenever it is well-defined. For appropriate sets $V$, the map $\tau_s$ is a left action of $\PSL_2(\R)$ or $\Gamma$. It is essentially (depending on conventions) the left-action variant of the so-called slash action. 

For $\alpha\in\Sigma$ and $f\in\Fct(D;\C)$ we let 
\[
 f_\alpha\sceq f\cdot 1_{D_\alpha},
\]
where $1_{D_\alpha}$ denotes the characteristic function of the set $D_\alpha$. Then $f= \sum_{\alpha\in\Sigma} f_\alpha$. We may identify $f$ with the vector $(f_\alpha)_{\alpha\in\Sigma}$ and then $D_\alpha$ with $I_\alpha$. Let
\[
 (\wt f_\alpha) = \wt f \sceq \mc L_{F,s}f.
\]
We derive explicit expressions for $\wt f_\alpha$, $\alpha\in\Sigma$, in dependence of the component functions $f_\beta$, $\beta\in\Sigma$.

Let $\alpha\in\Sigma$. Let $v\in C'_\alpha$ and suppose that 
\[
 \gamma'_v( (-\infty,0) )\cap \Gamma.C'\not=\emptyset.
\]
Then there exists a \textit{previous time of intersection}, namely
\[
 t_1 \sceq \max\{ t< 0 \mid \gamma'_v(t) \in \Gamma.C'\}.
\]
We call $\gamma'_v(t_1)$ the \textit{previous intersection} of $v$. To determine for a given $x=(r,\alpha)\in D_\alpha$ its preimages under $F$ is equivalent to determine for the vectors in $C'_\alpha$ their locations of previous intersection. These can be deduced from the side-pairing in $\fpch$ as explained in the following. For $C'_\alpha$, there exists a unique generator $(\pch, h) \in \choices$ and a unique element $(\pch', h')$ in the cycle determined by $(\pch,h)$ and a unique $n\in\Z$ such that one of the vertical sides of $T^n\pch'$ is contained in the base set of $C'_\alpha$ and 
\[
 T^n\pch' \subseteq \{ z\in\h \mid \eps_\alpha\cdot (\Rea z-  r_\alpha) < 0\}.
\]
In other words, some subset of $C'_\alpha$ is based on a vertical side of $T^n\pch'$ and the elements of $C'_\alpha$ do not point into $T^n\pch'$. In this case, we say that $C'_\alpha$ is \textit{neighboring} $T^n\pch'$. We have to distinguish the following three situations: 

\textbf{Situation (1):} $\pch'$ is a rectangle. Then we are in one of the situations shown in Figure~\ref{sit1}.

\begin{figure}[h]
\begin{center}
\includegraphics{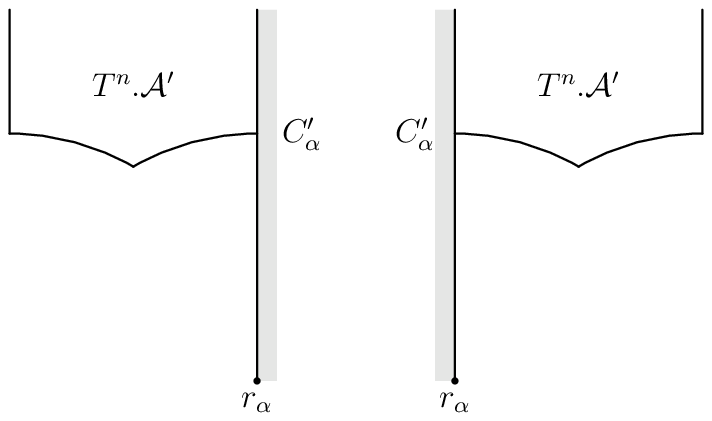}
\end{center}
\caption{Situation (1)}
\label{sit1}
\end{figure}

\textbf{Situation (2):} $\pch'$ is a triangle and $C'_\alpha$ is neighboring $T^n\pch'$ on its long side. Then we are in one of the situations shown in Figure~\ref{sit2}.

\begin{figure}[h]
\begin{center}
\includegraphics{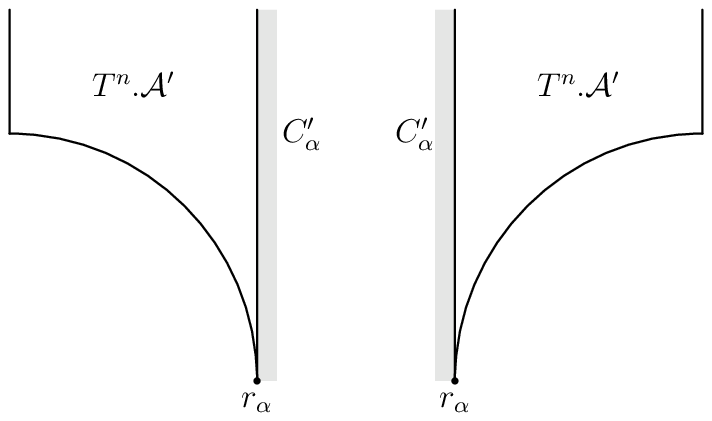}
\end{center}
\caption{Situation (2)}
\label{sit2}
\end{figure}

\textbf{Situation (3):} $\pch'$ is a triangle and $C'_\alpha$ is neighboring $T^n\pch'$ on its short side. Then we are in one of the situations shown in Figure~\ref{sit3}.

\begin{figure}[h]
\begin{center}
\includegraphics{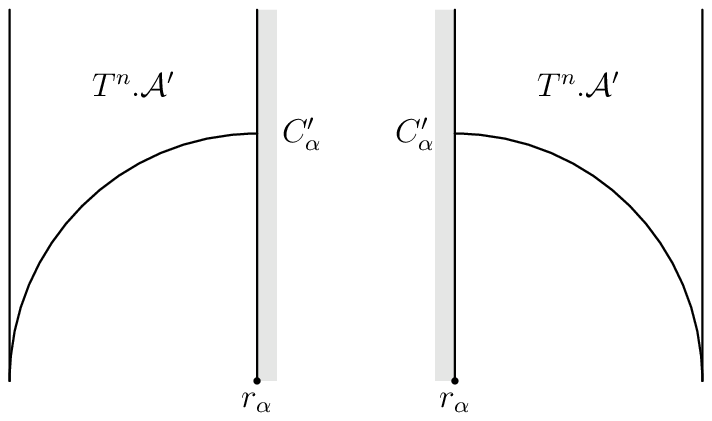}
\end{center}
\caption{Situation (3)}
\label{sit3}
\end{figure}

Note that the two sub-situations shown in any of the Figures~\ref{sit1}-\ref{sit3} are mirror-inverted and thus are equivalent for all further considerations. We will not distinguish these in the following figures. However, their differences are taken into account in all formulas by $\eps_\alpha$. For the locations of the previous intersections we have to subdivide these situations. All further numbering of situations will refer to and extend the ones just introduced.

\textbf{Situation (1):} Let $\big( (\pch_\ell,h_\ell) \big)_{\ell=1,\ldots,k}$ be the cycle determined by $(\pch,h)$ and suppose that $\pch'=\pch_j$ for some $j\in\{1,\ldots, k\}$. We set 
\[
C'_\ell\sceq C'_{(\pch,h),\ell},\quad f_\ell\sceq f_{(\pch,h),\ell}\quad\text{and}\quad \eps_\ell\sceq \eps_{(\pch,h),\ell} 
\]
for $\ell = 1,\ldots, k$, where the index $\ell$ is understood modulo $\cyl(\pch)$. According to the rotation direction of the cycle related to the orientation of $C'_\alpha$ we are either in Situation (1a) shown in Figure~\ref{sit1a} ($\eps_\alpha = \eps_j$) or in Situation (1b) shown in Figure~\ref{sit1b} ($\eps_\alpha = -\eps_j$).

\begin{figure}[h]
\begin{center}
\includegraphics{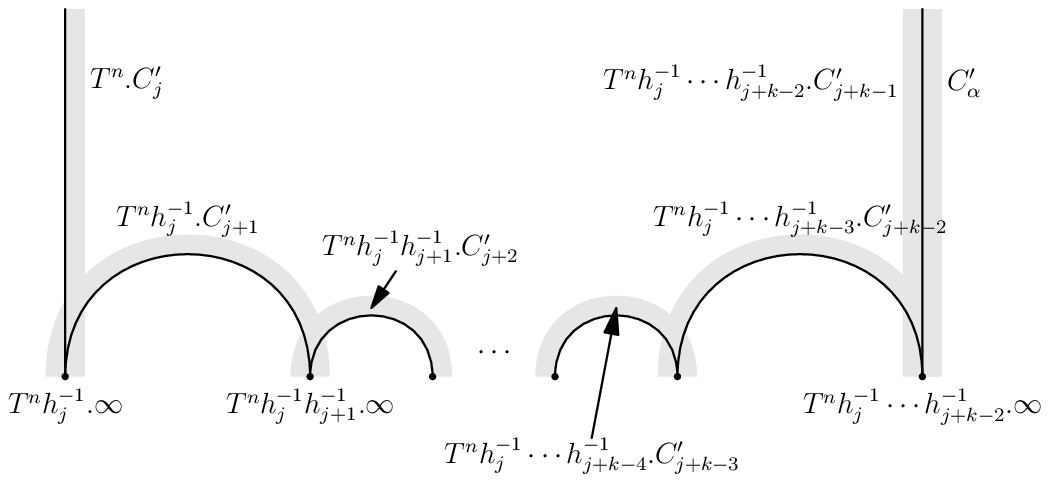}
\end{center}
\caption{Situation (1a)}
\label{sit1a}
\end{figure}

\begin{figure}[h]
\begin{center}
\includegraphics{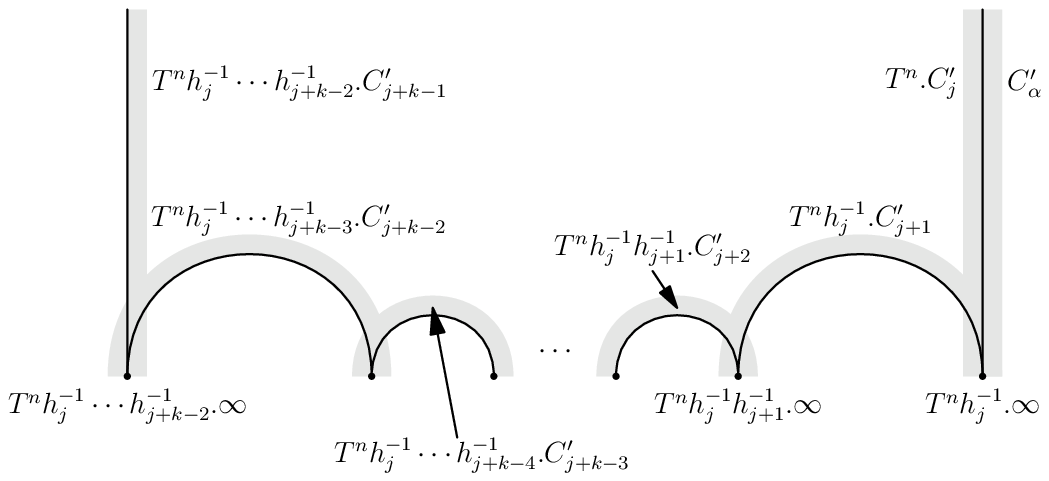}
\end{center}
\caption{Situation (1b)}
\label{sit1b}
\end{figure}

In Situation (1a) we have
\begin{align*}
\wt f_\alpha & = \tau_s(T^n)f_j + \tau_s(T^nh_j^{-1})f_{j+1} + \cdots + \tau_s(T^nh_j^{-1}\ldots h_{j+k-3}^{-1})f_{j+k-2}
\\
& = \sum_{\ell=0}^{k-2} \tau_s(T^nh_j^{-1}\cdots h_{j+\ell-1}^{-1}) f_{j+\ell},
\end{align*}
whereas in Situation (1b) we have
\begin{align*}
\wt f_\alpha & = \tau_s(T^nh_j^{-1})f_{j+1} + \tau_s(T^nh_j^{-1}h_{j+1}^{-1})f_{j+2} + \cdots + \tau_s(T^nh_j^{-1}\ldots h_{j+k-2}^{-1})f_{j+k-1}
\\
& = \sum_{\ell=0}^{k-2} \tau_s(T^nh_j^{-1}\cdots h_{j+\ell}^{-1})f_{j+\ell+1}.
\end{align*}

\textbf{Situation (2):} Let $\big( (\pch,h), (\pch', h^{-1}) \big)$ be the cycle determined by $(\pch, h)$ (note that this $\pch'$ is not necessarily the $\pch'$ from above) and let $m\sceq m(\pch, h)$. We set $C'_j\sceq C'_{(\pch,h),j}$, $f_j\sceq f_{(\pch,h),j}$ and $\eps_j\sceq \eps_{(\pch,h),j}$ for $j=1,2,3$. Then we have either Situation (2a) ($\eps_\alpha = \eps_1$) or Situation (2b) ($\eps_\alpha=-\eps_1$) shown in Figure~\ref{sit2ab}. 

\begin{figure}[h]
\begin{center}
\includegraphics{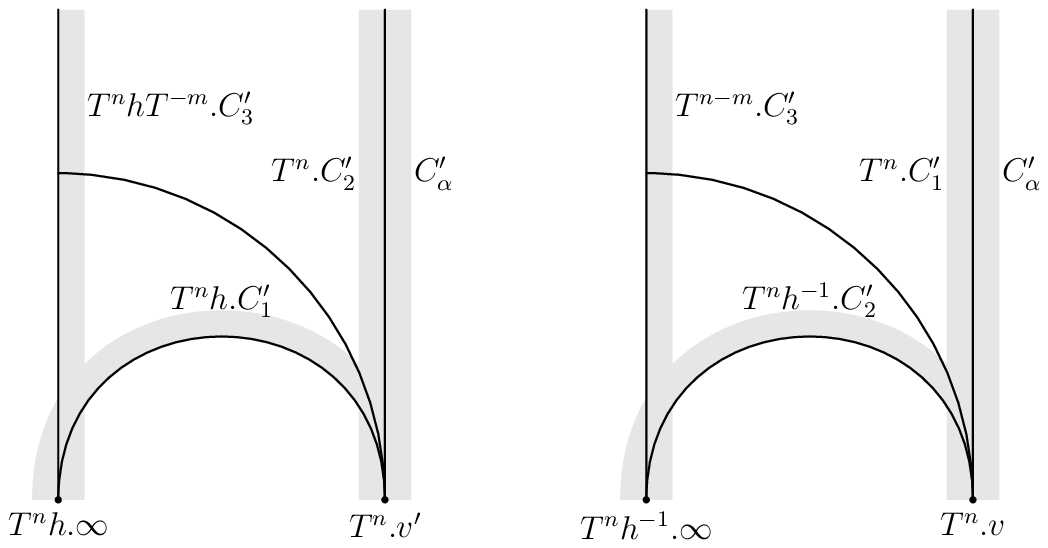}
\end{center}
\caption{On the left Situation (2a),  on the right Situation (2b)}
\label{sit2ab}
\end{figure}

In Situation (2a) we have
\[
 \wt f_\alpha = \tau_s(T^nh)f_1 + \tau_s(T^nhT^{-m})f_3,
\]
and in Situation (2b) we have
\[
 \wt f_\alpha = \tau_s(T^nh^{-1})f_2 + \tau_s(T^{n-m})f_3.
\]

\textbf{Situation (3):} We use the notation from Situation (2). Then we  have either Situation (3a) ($\eps_\alpha = \eps_1$)
or Situation (3b) ($\eps_\alpha=-\eps_1$) shown in Figure~\ref{sit3ab}. 

\begin{figure}[h]
\begin{center}
\includegraphics{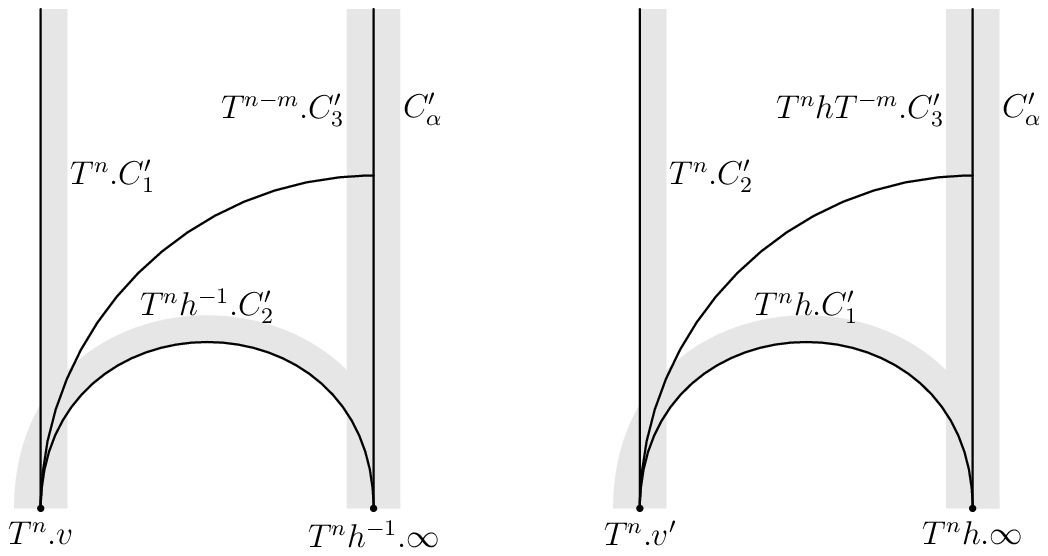}
\end{center}
\caption{On the left Situation (3a),  on the right Situation (3b)}
\label{sit3ab}
\end{figure}

In Situation (3a) we have
\[
\wt f_\alpha = \tau_s(T^n)f_1 + \tau_s(T^nh^{-1})f_2,
\]
whereas in Situation (3b) we have
\[
\wt f_\alpha = \tau_s(T^nh)f_1 + \tau_s(T^n)f_2.
\]

\begin{example}\label{ex_Gamma_backward}
Continuing Example~\ref{ex_Gamma_forward}, we deduce the associated family of transfer operators. The locations of previous intersections can easily be read off from Figure~\ref{ex_backward}.
\begin{figure}[h]
\begin{center}
\includegraphics{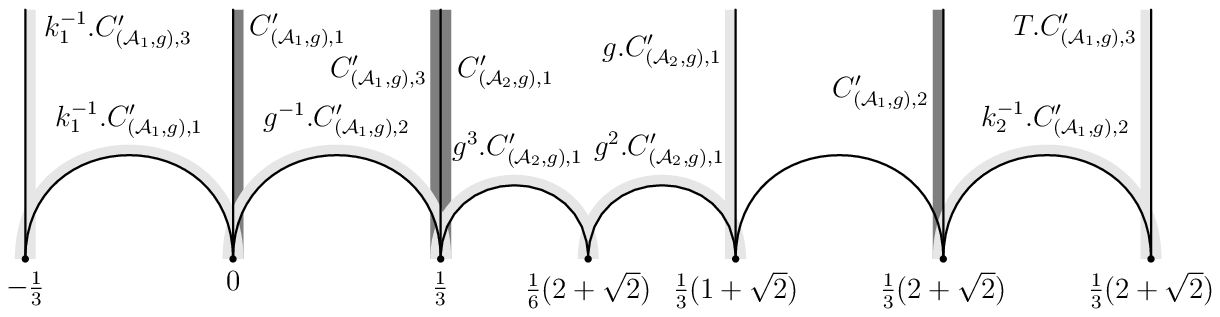}
\end{center}
\caption{Set of representatives for cross section (gray) and backward intersections (light gray)}
\label{ex_backward}
\end{figure}
Using the identification
\[
 f = \begin{pmatrix} f_{(\pch_1,g),1} \\ f_{(\pch_1,g),2} \\ f_{(\pch_1,g),3} \\ f_{(\pch_2,g),1}\end{pmatrix}
\]
(in this order), the associated transfer operator $\mc L_s$ with parameter $s$ is represented by the matrix
\[
 \mc L_s = 
\begin{pmatrix}
\tau_s(k_1^{-1}) & 0 & \tau_s(k_1^{-1}) & 0
\\
0 & \tau_s(k_2^{-1}) & \tau_s(T) & 0
\\
0 & 0 & 0 & \tau_s(g) + \tau_s(g^2) + \tau_s(g^3)
\\
1 & \tau_s(g^{-1}) & 0 & 0  
\end{pmatrix}.
\]
\end{example}

\subsection{Period functions}\label{sec_per}
Let $s\in\C$. We say that a function $\varphi\colon \R\to\C$ extends smoothly ($C^\infty$) to $P^1(\R)$ if for some (and indeed any) element $g\in\PSL_2(\R)$, $g.\infty\not=\infty$, the functions $\varphi$ and $\tau_s(g)\varphi$ are smooth on $\R$. Note that this notion of smooth extension depends on $s$.

For any $\alpha\in\Sigma$, we find a unique $\beta\in\Sigma$ and unique $g\in\Gamma$ such that $C'_\alpha$ and $g.C'_\beta$ are based on the same geodesic segment but are disjoint. We call $(\beta,g)$ the \textit{tuple assigned to} $\alpha$. The precise values for $\beta$ and $g$ can be read off from Situations (1a)--(3b). For $f\in \Fct(D;\C)$ we define
\begin{equation}\label{psialpha}
 \psi_{\alpha,f}\sceq
\begin{cases}
\eps_\alpha f_\alpha & \text{on $\langle r_\alpha, \eps_\alpha\infty\rangle$}
\\
-\eps_\alpha\tau_s(g) f_\beta & \text{on $\langle r_\alpha, -\eps_\alpha\infty\rangle$.}
\end{cases}
\end{equation}

The space of \textit{period functions} $\FE_s^{\omega,\dec}(\Gamma;\fpch,\choices,\shmap)$ (relative to the choices of $\fpch$, $\choices$ and $\shmap$) is defined to be the space of function vectors
\[
 f = (f_\alpha)_{\alpha\in\Sigma}
\]
such that 
\begin{itemize}
\item[(PF1)] $f_\alpha \in C^\omega(I_\alpha;\C)$ for $\alpha\in \Sigma$,
\item[(PF2)] $f=\mc L_{F,s}f$.
\item[(PF3)] If for $\alpha\in\Sigma$, the map $\psi_{\alpha,f}$ in \eqref{psialpha} arises from Situation (1a) or (3b) (that is, $\beta$ and $g$ are determined by these situations), then $\psi_{\alpha,f}$ extends smoothly to $\R$.
\item[(PF4)] If for $\alpha\in\Sigma$, the map $\psi_{\alpha,f}$ is not determined by Situation (1a) or (3b), then it extends smoothly to $P^1(\R)$.
\end{itemize}

\begin{remark}\label{issmooth}
If (PF3) is satisfied, then the maps considered there actually extend smoothly to $P^1(\R)$ by the following argument: If 
\[
\psi_\alpha=
\begin{cases}
\eps_\alpha f_\alpha & \text{on $\langle r_\alpha,\eps_\alpha\infty\rangle$}
\\
-\eps_\alpha \tau_s(g) f_\beta & \text{on $\langle r_\alpha, -\eps_\alpha\infty\rangle$}
\end{cases}
\]
is one of these maps, then 
\[
\psi_\beta=
\begin{cases}
\eps_\beta f_\beta & \text{on $\langle r_\beta, \eps_\beta\infty\rangle$}
\\
-\eps_\beta \tau_s(g^{-1})f_\alpha & \text{on $\langle r_\beta, -\eps_\beta\infty\rangle$}
\end{cases}
\]
is also one and $\eps_\alpha = \eps_\beta$, $r_\beta = g^{-1}.\infty$ and $g^{-1}.r_\alpha=\infty$. Thus, $\tau_s(g^{-1})\psi_\alpha = \psi_\beta$.
\end{remark}

\begin{example}
For the lattice $\Gamma$ from Example~\ref{ex_Gamma} with the choices of $\fpch$, $\choices$ and $\shmap$ done there we set
\[
 f_1 \sceq f_{(\pch_1,g),1},\ f_2 \sceq f_{(\pch_1,g),2},\ f_3\sceq f_{(\pch_1,g),3},\ \text{and}\ f_4\sceq f_{(\pch_2,g),1}.
\]
The space of period functions $\FE_s^{\omega,\dec}(\Gamma;\fpch,\choices,\shmap)$ consists of the function vectors $f=(f_1,\ldots, f_4)$ such that 
\begin{enumerate}[(i)]
\item the function $f_1$ is real-analytic on $(0,\infty)$, the function $f_2$ is real-analytic on $\big(-\infty, \tfrac13(2+\sqrt{2})\big)$, the function $f_3$ is real-analytic on $\big(-\infty, \tfrac13\big)$, and the function $f_4$ is real-analytic on $\big(\tfrac13,\infty\big)$,
\item the following system of functional equations is satisfied:
\begin{align*}
f_1 & = \tau_s(k_1^{-1})f_1 + \tau_s(k_1^{-1})f_3
\\
f_2 & = \tau_s(k_2^{-1})f_2 + \tau_s(T)f_3
\\
f_3 & = \tau_s(g)f_4 + \tau_s(g^2)f_4 + \tau_s(g^3)f_4
\\
f_4 & = f_1 + \tau_s(g^{-1})f_2,
\end{align*}
\item the maps
\[
\begin{cases}
f_1 & \text{on $(0,\infty)$}
\\
-\tau_s(T^{-1})f_2 & \text{on $(-\infty, 0)$}
\end{cases}
\quad\text{and}\quad
\begin{cases}
f_4 & \text{on $\big(\tfrac13,\infty\big)$}
\\
-f_3 & \text{on $\big(-\infty,\tfrac13\big)$}
\end{cases}
\]
extend smoothly to $P^1(\R)$.
\end{enumerate}
\end{example}

\subsection{Parabolic $1$-cohomology} 

For the proof of Theorem~A we take advantage of the characterization in \cite{BLZ_part2} of Maass cusp forms with eigenvalue $s(1-s)$ as parabolic $1$-cocycle classes with values in the semi-analytic smooth vectors of the principal series representation with spectral parameter $s$. In the following we briefly recall this characterization.

Let $s\in\C$. The space $\mc V_s^{\omega*,\infty}$ of semi-analytic smooth vectors in the line model of the principal series representation with spectral parameter $s$ is the space of functions $\varphi\colon \R\to \C$ which are smooth and extend smoothly to $P^1(\R)$ and are real-analytic on $\R\smallsetminus E$, where $E$ is a finite subset which may depend on $\varphi$. The lattice $\Gamma$ acts on $\mc V_s^{\omega*,\infty}$ via the action $\tau_s$ from \eqref{tau}.

Recall that the space of $1$-cocycles of group cohomology of $\Gamma$  with values in $\mc V_s^{\omega*,\infty}$ is
\[
 Z^1(\Gamma;\mc V_s^{\omega*,\infty}) = \{ c\colon \Gamma \to \mc V_s^{\omega*,\infty} \mid \forall\, g,h\in\Gamma\colon c_{gh} = \tau_s(h^{-1})c_g + c_h\}.
\]
We use here the notation of restricted cocycles and write $c_g\in \mc V_s^{\omega*,\infty}$ for the image $c(g)$ of $g\in\Gamma$ under $c\colon\Gamma\to\mc V_s^{\omega*,\infty}$. The space of parabolic $1$-cocycles is
\[
Z^1_\parab(\Gamma;\mc V_s^{\omega*,\infty}) = \left\{ c\in Z^1(\Gamma;\mc V_s^{\omega*,\infty}) \left\vert\ 
\begin{split}
& \forall\, p\in\Gamma\ \text{parabolic}\ \exists\, \psi\in\mc V_s^{\omega*,\infty} \colon 
\\
& c_p = \tau_s(p^{-1})\psi -\psi 
\end{split}
\right.\right\}.
\]
The spaces of $1$-coboundaries of group cohomology and of parabolic cohomology are equal. They are
\[
 B^1_\parab(\Gamma;\mc V_s^{\omega*,\infty}) = B^1(\Gamma; \mc V_s^{\omega*,\infty}) = \{ g\mapsto \tau_s(g^{-1})\psi-\psi \mid \psi\in \mc V_s^{\omega*,\infty}\}.
\]
Then the parabolic $1$-cohomology space is the quotient space
\[
 H^1_\parab(\Gamma;\mc V_s^{\omega*,\infty}) = Z^1_\parab(\Gamma;\mc V_s^{\omega*,\infty})/ B^1_\parab(\Gamma;\mc V_s^{\omega*,\infty}).
\]
Let $\MCF_s(\Gamma)$ denote the space of Maass cusp forms for $\Gamma$ with eigenvalue $s(1-s)$. 

\begin{thm}[\cite{BLZ_part2}]\label{Parab} Let $s\in\C$ with $\Rea s \in (0,1)$. Then the vector spaces $\MCF_s(\Gamma)$ and $H^1_\parab(\Gamma;\mc V_s^{\omega*,\infty})$ are isomorphic.
\end{thm}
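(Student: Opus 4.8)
This is the content of \cite{BLZ_part2}; we only indicate the structure of the argument, which realises a non-holomorphic analogue of the Eichler--Shimura isomorphism. The plan is to produce two mutually inverse linear maps between $\MCF_s(\Gamma)$ and $H^1_\parab(\Gamma;\mc V_s^{\omega*,\infty})$. For the first, fix a cusp $\eta\in P^1(\R)$ of $\Gamma$. For $\xi\in P^1(\R)$ let $P_s(\cdot,\xi)$ be the Poisson kernel, a $\Delta$-eigenfunction on $\h$ with eigenvalue $s(1-s)$ which transforms so that $\xi\mapsto P_s(g^{-1}w,\xi)$ equals $\tau_s(g^{-1})$ applied to $\xi\mapsto P_s(w,\xi)$, and let $[u,v]$ denote the Green's form attached to two eigenfunctions, the $\C$-valued $1$-form on $\h$ with $d[u,v]=(u\,\Delta v-v\,\Delta u)\,\dvol$ which is equivariant under isometries; for two eigenfunctions with the same eigenvalue it is closed. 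For $u\in\MCF_s(\Gamma)$ put
\[
 c_g(\xi)\sceq \int_{\eta}^{g^{-1}\eta}[u,P_s(\cdot,\xi)],\qquad g\in\Gamma,\ \xi\in P^1(\R),
\]
the integral being taken along a path in $\h$ joining the two cusps. Convergence at the endpoints uses the rapid decay of $u$ towards the cusps; closedness gives path-independence, so $c_g$ is well defined; additivity over concatenated paths together with the $\Gamma$-invariance of $u$ and the covariance of $P_s$ yields $c_{gh}=\tau_s(h^{-1})c_g+c_h$; and changing the base cusp $\eta$ alters $c$ by a coboundary. Hence $u\mapsto[c]$ is a well-defined map into $H^1(\Gamma;\mc V_s^{\omega*,\infty})$.

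The analytic core of this direction is to check that $c_g\in\mc V_s^{\omega*,\infty}$ and that $[c]$ is parabolic. Off the finitely many boundary points at which the integration path terminates, $\xi\mapsto c_g(\xi)$ is real-analytic because $P_s(z,\xi)$ is jointly real-analytic in $(z,\xi)$ for $z\in\h$, $\xi\in P^1(\R)$; the covariance of $P_s$ handles the point $\infty$. Smoothness across all of $P^1(\R)$, in particular across the cuspidal endpoints $\eta$ and $g^{-1}\eta$, is exactly where the vanishing of the constant Fourier coefficient of $u$ at each cusp enters. Parabolicity follows from the same asymptotics: if $p\in\Gamma$ is parabolic with fixed point $\eta_p$, one routes the path through $\eta_p$ and finds $c_p=\tau_s(p^{-1})\psi-\psi$ with $\psi=-\int_\eta^{\eta_p}[u,P_s(\cdot,\xi)]\in\mc V_s^{\omega*,\infty}$. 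Thus $[c]\in H^1_\parab(\Gamma;\mc V_s^{\omega*,\infty})$. (In \cite{BLZ_part2} one first lands naturally in a larger ``boundary germ'' coefficient module and then transfers to $\mc V_s^{\omega*,\infty}$ through the long exact sequences in parabolic cohomology attached to short exact sequences of coefficient modules, the connecting maps and the cohomology of the kernels/quotients cancelling.)

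Conversely, given $[c]\in H^1_\parab(\Gamma;\mc V_s^{\omega*,\infty})$ one reconstructs a Maass cusp form by Poisson-transforming the cocycle: using a fundamental domain and the fact that $c$ is locally a coboundary, one patches the Poisson integrals of the local data into a single $\Gamma$-invariant $\Delta$-eigenfunction $u$ with eigenvalue $s(1-s)$, and parabolicity forces the leading (constant) term of the expansion of $u$ at every cusp to cancel, so $u$ decays rapidly and is a genuine cusp form, not a residue of Eisenstein series or a function of merely moderate growth. One then verifies that coboundaries are sent to $0$, that the two constructions are mutually inverse, and that the hypothesis $\Rea s\in(0,1)$ guarantees irreducibility of the principal series and hence injectivity of the Poisson transform together with the absence of convergence obstructions. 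The main obstacle throughout is precisely this matching of two a priori different fine conditions: ``smooth, not merely analytic, at the cuspidal points and parabolic'' on the cohomological side versus ``$L^2$ with rapid cuspidal decay'' on the automorphic side; bridging it requires a careful local analysis of eigenfunctions and of cocycle values near the cuspidal points of $P^1(\R)$.
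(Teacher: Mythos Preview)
The paper does not give its own proof of this theorem; it is quoted from \cite{BLZ_part2} and only the forward integral transform is described afterwards. Your sketch is a faithful outline of the BLZ argument and is consistent with the paper's description of the map $u\mapsto[c]$ (your Poisson kernel $P_s(\cdot,\xi)$ is the paper's $R(\xi,\cdot)^s$, and your integration from $\eta$ to $g^{-1}\eta$ differs from the paper's $\int_{g^{-1}z_0}^{z_0}$ only by an immaterial overall sign). No gap.
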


The isomorphism in Theorem~\ref{Parab} is constructive and given by the following integral transform. Let $u\in \MCF_s(\Gamma)$ and choose any $z_0\in\h$ or cuspidal. Then the parabolic $1$-cocycle class $[c]\in H^1_\parab(\Gamma;\mc V_s^{\omega*,\infty})$ associated to $u$ is represented by the cocycle $c$ given by
\[
 c_g(r) \sceq \int_{g^{-1}.z_0}^{z_0} [u, R(r,\cdot)^s]
\]
for $g\in\Gamma$. Here we use $R\colon \R\times\h\to\h$, 
\[
 R(r,z) \sceq \Ima\left(\frac{1}{r-z}\right)
\]
and
\[
 [u,v] \sceq \frac{\partial u}{\partial z} \cdot v\cdot dz + u\cdot\frac{\partial v}{\partial\overline z}\cdot d\overline z \qquad\text{(Green form)}
\]
for any complex-valued smooth function $v$ on $\h$. The integration is performed along any differentiable path from $g^{-1}.z_0$ to $z_0$ which is essentially contained in $\h$. The integral is well-defined since the $1$-form $[u,R(r,\cdot)^s]$ is closed. A change of the choice of $z_0$ changes $c$ by a parabolic $1$-coboundary. The $\Gamma$-action via $\tau_s$ translates into a change of path of integration. More precisely, we have
\begin{equation}\label{int_tau}
 \tau_s(g^{-1})\int_a^b [u,R(r,\cdot)^s] = \int_{g^{-1}.a}^{g^{-1}.b} [u,R(r,\cdot)^s]
\end{equation}
for all $g\in\Gamma$ and cuspidal points $a,b$.

\section{Isomorphism of period functions and Maass cusp forms}\label{sec_isom}

In this section we prove the following statement: 

\begin{thm}\label{finaliso}
For $s\in\C$ with $\Rea s \in (0,1)$, the vector spaces $\FE_s^{\omega,\dec}(\Gamma;\fpch,\choices,\shmap)$ and $H^1_\parab(\Gamma;\mc V_s^{\omega*,\infty})$ are isomorphic.
\end{thm}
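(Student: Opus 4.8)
The strategy is to construct explicit maps in both directions between period functions and parabolic $1$-cocycles, show they are inverse to each other, and verify that coboundaries correspond to the relevant degenerate period functions (if any). Concretely, given a parabolic $1$-cocycle $c\in Z^1_\parab(\Gamma;\mc V_s^{\omega*,\infty})$, I would define a function vector $f=(f_\alpha)_{\alpha\in\Sigma}$ as follows. For each $\alpha\in\Sigma$, recall that $C'_\alpha$ is based on the geodesic segment $(r_\alpha,\infty)$ with $r_\alpha$ cuspidal; let $(\beta,g)$ be the tuple assigned to $\alpha$ as in Section~\ref{sec_per}, so that $g.r_\beta = r_\alpha$ and $g.\infty$ is the second endpoint of the segment carrying $g.C'_\beta$ (in fact $g.\infty = \infty$ in the relevant cases by Remark~\ref{issmooth}). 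Then set $f_\alpha \sceq \eps_\alpha\, c_{g_\alpha}|_{I_\alpha}$ for a suitable group element $g_\alpha\in\Gamma$ read off from the side-pairing — essentially the element that moves $\infty$ to $r_\alpha$ and encodes the return map along $C'_\alpha$. The cocycle relation $c_{gh} = \tau_s(h^{-1})c_g + c_h$ then translates, via the explicit description of $F$ in Situations (1a)--(3b) and the matrix form of $\mc L_{F,s}$, directly into the functional equation (PF2). The real-analyticity (PF1) comes from $c_g\in\mc V_s^{\omega*,\infty}$ together with the fact that, after the $\tau_s$-twists, the finitely many possible non-analyticity points of the various $c_g$ get pushed outside the open intervals $I_\alpha$ — this uses condition~(A), which guarantees that the summits of relevant isometric spheres (the points governing where the $C'_\alpha$ and the combinatorics of $\fpch$ interact) are not vertices, so no two non-vertical sides meet there and the relevant endpoints stay cuspidal and in particular stay at the boundary of the intervals.

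For the smoothness conditions (PF3) and (PF4): the auxiliary function $\psi_{\alpha,f}$ of \eqref{psialpha} glues $\eps_\alpha f_\alpha$ on $\langle r_\alpha,\eps_\alpha\infty\rangle$ to $-\eps_\alpha\tau_s(g)f_\beta$ on $\langle r_\alpha,-\eps_\alpha\infty\rangle$. Substituting the definition of $f$ in terms of $c$, both pieces become (up to the twist) restrictions of $c_{g_\alpha}$, or rather of $c_{g_\alpha}$ and $c_{g_\alpha h}$ for the side-pairing element $h$; using the cocycle relation once more and the fact that $c_h$ for parabolic $h\in\Gamma_\infty$ lies in $\mc V_s^{\omega*,\infty}$ and thus extends smoothly to $P^1(\R)$, one sees that $\psi_{\alpha,f}$ is (a twist of) a single semi-analytic smooth vector, hence extends smoothly to $P^1(\R)$. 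In the exceptional Situations (1a)/(3b) one only gets smooth extension to $\R$ because the relevant group element is not in $\Gamma_\infty$ and the would-be value at $\infty$ is not controlled — which is exactly (PF3). So the map $c\mapsto f$ lands in the period-function space, and it clearly descends to cohomology classes: a coboundary $c_g = \tau_s(g^{-1})\psi-\psi$ produces an $f$ built from a single $\psi\in\mc V_s^{\omega*,\infty}$, and one checks this $f$ is the zero of the quotient (it is a ``trivial'' period function in the sense that it is a coboundary-type solution; if the period-function space as defined already has trivial kernel one must show such $f$ is $0$, otherwise one quotients accordingly).

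For the reverse direction, given a period function $f$, I would reconstruct a cocycle. The natural route — following \cite{Moeller_Pohl, Pohl_mcf_Gamma0p} — is to first use $f$ together with the functional equations to build the values $c_g$ for the side-pairing generators of $\Gamma$ (the generators of $\Gamma_\infty$ and the elements $k_i$, $g$ appearing in the transfer operator matrix), then extend to all of $\Gamma$ by the cocycle relation, checking consistency on the relations of $\Gamma$ (e.g. $g^4=\id$ in the running example — so finite-order relations must be checked, which is where the $\tau_s$-action having no fixed nonzero vectors, or a direct computation, is needed). One defines, for such a generator $g$ with $g.\infty\neq\infty$, the function $c_g$ on $\R$ by patching: on $\langle r_\alpha,\eps_\alpha\infty\rangle$ it equals $\eps_\alpha f_\alpha$ (for the appropriate $\alpha$), on the complementary interval it is forced by (PF3)/(PF4) via $\psi_{\alpha,f}$; conditions (PF3)--(PF4) are precisely what guarantees $c_g$ is a globally defined element of $\mc V_s^{\omega*,\infty}$ (smooth, extending smoothly to $P^1(\R)$, analytic off a finite set). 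The functional equations (PF2) are exactly the cocycle relations among the generators; parabolicity of the resulting cocycle holds because $c_p$ for $p\in\Gamma_\infty$ is built to be a coboundary of the $\psi$-type element. Finally one verifies the two compositions are the identity (essentially bookkeeping, since both constructions are ``the same'' patching read in two directions) and that the induced map on $H^1_\parab$ is well-defined and bijective; combined with Theorem~\ref{Parab} this proves Theorem~A.

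\textbf{Main obstacle.} The delicate point is the equivalence between the \emph{local} regularity and gluing conditions (PF1), (PF3), (PF4) on the finitely many intervals $I_\alpha$ and the \emph{global} condition $c_g\in\mc V_s^{\omega*,\infty}$ for \emph{all} $g\in\Gamma$ — in particular, showing that extending the finitely many $c_g$ (for generators) by the cocycle relation never destroys semi-analyticity or the smooth extension to $P^1(\R)$, and that no hidden obstruction appears at the group relations of $\Gamma$. This is exactly where condition~(A) is essential: it ensures the combinatorial tessellation by $\fpch$ is ``clean'' at summits, so that the cycles defined by the side-pairing algorithm close up properly and the twisted restrictions of cocycle values do not accumulate non-analytic points inside the $I_\alpha$ or at the cuspidal endpoints where smooth extension is claimed. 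Handling the triangle cases (Situations (2), (3)) with the free integer parameter $m=\shmap(\pch,h)$ cleanly, so that the construction is manifestly independent of $\shmap$ up to isomorphism, is the other bookkeeping-heavy part.
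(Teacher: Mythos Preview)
Your $f\to c$ direction is essentially the paper's Proposition~\ref{ftoc}: define $c$ on the side-pairing generators via the $\psi_{\alpha,f}$, extend by the cocycle relation, and check consistency on the relations coming from the Poincar\'e Theorem. The paper carries this out case by case (Situations (1a)--(3b), the elliptic relations, and then a separate argument for parabolicity at cusps \emph{not} equivalent to $\infty$), so your sketch here is on the right track, though you only mention parabolicity for $p\in\Gamma_\infty$ and not for stabilizers of the other cusps, which is where the real work is.

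The genuine gap is in your $c\to f$ direction. Two problems:

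\emph{First}, your construction ``$f_\alpha = \eps_\alpha c_{g_\alpha}|_{I_\alpha}$ for the element $g_\alpha$ that moves $\infty$ to $r_\alpha$'' only makes sense when $r_\alpha$ is $\Gamma$-equivalent to $\infty$, i.e.\ for $\alpha\in\Sigma'$. For $\alpha\in\Sigma\smallsetminus\Sigma'$ (which occurs whenever $\Gamma\backslash\h$ has more than one cusp, as in the running example) there is no such $g_\alpha$. The paper handles this by invoking the \emph{parabolic} hypothesis: for $p$ generating $\Stab_\Gamma(r_\alpha)$ one writes $c_p=\tau_s(p^{-1})\psi-\psi$ for a unique $\psi\in\mc V_s^{\omega*,\infty}$ and sets $f_\alpha \sceq -\eps_\alpha\psi\cdot 1_{I_\alpha}$. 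Your proposal never uses parabolicity in this direction, so the construction is incomplete.

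\emph{Second}, your argument for (PF1) does not work as stated. An element of $\mc V_s^{\omega*,\infty}$ is real-analytic off some finite set $E$, but $E$ is \emph{arbitrary}; nothing in the cocycle condition or in condition~(A) forces the non-analytic points of $c_{g_\alpha}$ to lie outside $I_\alpha$. The paper does \emph{not} argue regularity directly from cocycle axioms. Instead it passes through Theorem~\ref{Parab}: take the representative $c$ with $c_T=0$, let $u$ be the Maass cusp form it corresponds to, and use the integral formula $c_g(r)=\int_{g^{-1}.\infty}^\infty[u,R(r,\cdot)^s]$ (and the analogous one for the $\psi$ above). Real-analyticity on $I_\alpha$ then follows from analytic properties of this integral (\cite[Lemma~3.2]{Pohl_mcf_Gamma0p}), and (PF2) follows from closedness of the form $[u,R(r,\cdot)^s]$ by deforming the path of integration. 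So the passage through the Maass form is not merely cosmetic --- it is what pins down the location of the singularities and makes (PF1) provable.
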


This, together with Theorem~\ref{Parab}, establishes Theorem~A. The isomorphism in Theorem~\ref{finaliso} is provided by the two constructions presented in the following. For these, let 
\[
 \Sigma'\sceq \{\alpha\in\Sigma \mid \text{$r_\alpha$ is $\Gamma$-equivalent to $\infty$}\}.
\]
If $\alpha\in\Sigma'$, then $C'_\alpha$ is as in Situation (1) or (3). Hence there exists $b\in\Gamma$ such that $C'_\alpha$ is based on the geodesic segment $(b^{-1}.\infty,\infty)$. The element $b$ is unique only up to left multiplication with elements in $\Gamma_\infty$. Let $(\beta,g)$ be the tuple assigned to $\alpha$ (cf.\@ Section~\ref{sec_per}). For any possible choice of $b$, we call $(\beta,g,b)$ a \textit{triple assigned to} $\alpha$.
Let
\[
\mc S \sceq \{ b\in\Gamma \mid \text{$\exists\, \alpha\in\Sigma'\ \exists\,\beta\in\Sigma\ \exists\, g\in\Gamma\colon (\beta,g,b)$ is assigned to $\alpha$}\} \cup \{T\}. 
\]

\begin{enumerate}[(a)]
\item Let $f\in \FE_s^{\omega,\dec}(\Gamma;\fpch,\choices,\shmap)$. We define a map 
\[
 c \sceq c(f) \colon \mc S \to \mc V^{\omega*,\infty}_s
\]
by $c_T \sceq 0$ and 
\[
 c_{b} \sceq \psi_{\alpha,f} \quad\text{for $\alpha\in\Sigma'$,}
\]
where $(\beta,g, b)\in\Sigma\times\Gamma\times\Gamma$ is a triple assigned to $\alpha$. Proposition~\ref{ftoc} below shows that $c$ determines a unique parabolic $1$-cocycle.
\item Let $[c] \in H^1_\parab(\Gamma;\mc V_s^{\omega*,\infty})$. Pick its unique representative $c\in Z^1_\parab(\Gamma; \mc V_s^{\omega*,\infty})$ for which $c_T = 0$. We associate to $[c]$ a function vector $f([c]) = (f_\alpha)_{\alpha\in\Sigma}$ as follows. Suppose first that $\alpha\in \Sigma'$. Pick a triple $(\beta, g, b) \in \Sigma \times \Gamma\times\Gamma$ which is assigned to $\alpha$. We define
\[
 f_\alpha \sceq \eps_\alpha c_b \cdot 1_{I_\alpha}.
\]
Suppose now that $\alpha\in\Sigma\smallsetminus \Sigma'$. Recall that $C'_\alpha$ is then based on a geodesic segment of the form $(v,\infty)$ with $v$ being a cuspidal point which is not $\Gamma$-equivalent to $\infty$. Let $p$ be a generator of $\Stab_\Gamma(v)$. Let $\psi\in \mc V_s^{\omega*,\infty}$ be the unique element (see \cite[Lemma~3.3]{Pohl_mcf_Gamma0p}) such that 
\[
 c_p = \tau_s(p^{-1})\psi - \psi.
\]
We define
\[
 f_\alpha \sceq -\eps_\alpha \psi\cdot 1_{I_\alpha}.
\]
\end{enumerate}

Instead of Theorem~\ref{finaliso} we show its following concretization.

\begin{thm}\label{actualfinal}
Let $s\in\C$ with $0 < \Rea s < 1$. Then the map
\[
 \FE_s^{\omega,\dec}(\Gamma;\fpch,\choices,\shmap) \to H^1_\parab(\Gamma;\mc V_s^{\omega*,\infty}),\quad f \mapsto [c(f)]
\]
is a linear isomorphism. Its inverse map is given by
\[
 H^1_\parab(\Gamma;\mc V_s^{\omega*,\infty}) \to \FE_s^{\omega,\dec}(\Gamma;\fpch,\choices,\shmap),\quad [c] \mapsto f([c]).
\]
\end{thm}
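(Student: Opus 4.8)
The plan is to verify that the two maps $f \mapsto [c(f)]$ and $[c] \mapsto f([c])$ are well-defined, linear, and mutually inverse. I would organize the argument around three propositions, mirroring the structure already signalled in the excerpt: \textbf{(I)} given $f \in \FE_s^{\omega,\dec}(\Gamma;\fpch,\choices,\shmap)$, the assignment $c = c(f)$ on the finite set $\mc S$ extends uniquely to a parabolic $1$-cocycle with values in $\mc V_s^{\omega*,\infty}$ (this is stated as Proposition~\ref{ftoc}); \textbf{(II)} conversely, given a parabolic $1$-cocycle $c$ normalized by $c_T = 0$, the function vector $f([c])$ lies in $\FE_s^{\omega,\dec}(\Gamma;\fpch,\choices,\shmap)$, i.e.\@ it satisfies (PF1)--(PF4); and \textbf{(III)} the two constructions are inverse to one another, and both respect the passage to cohomology classes.

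For \textbf{(I)}, the first task is to see that $c(f)$ is consistently defined on $\mc S$: a given $b \in \mc S$ may arise from several $\alpha \in \Sigma'$ or with different choices in the triple, and one must check that $\psi_{\alpha,f}$ depends only on $b$ up to the $\Gamma_\infty$-ambiguity, using Remark~\ref{issmooth} (which shows $\tau_s(g^{-1})\psi_\alpha = \psi_\beta$ and hence compatibility between the two halves of \eqref{psialpha}) together with $c_T = 0$. Next, one must check that the prescribed values force a genuine cocycle on all of $\Gamma$: the side-pairing elements $k_j(\pch)$, the element $g$ attached to triangles, and $T$ generate $\Gamma$, and the relations among them are precisely the cycle relations from the tesselation $\fpch$; one verifies that the functional equations (PF2) (spelled out via Situations (1a)--(3b)) are exactly the cocycle identities $c_{gh} = \tau_s(h^{-1})c_g + c_h$ on these generators and relations. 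That each $c_b$ lies in $\mc V_s^{\omega*,\infty}$ follows from (PF1) (real-analyticity away from the finitely many cusp images) combined with (PF3)--(PF4) (smooth extension to $P^1(\R)$). Parabolicity is the condition that for each parabolic $p$ one has $c_p = \tau_s(p^{-1})\psi - \psi$; for $p$ conjugate to $T$ this holds with $\psi = 0$ by $c_T = 0$, and for parabolics fixing a cusp $v$ not $\Gamma$-equivalent to $\infty$ one invokes \cite[Lemma~3.3]{Pohl_mcf_Gamma0p} to produce the required $\psi$ from the finite-term functional equation satisfied by the component $f_\alpha$ attached to $v$.

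For \textbf{(II)}, I would run the construction of $f([c])$ and check (PF1)--(PF4) directly. Real-analyticity (PF1) and smooth extension to $P^1(\R)$ of the relevant combinations (PF4), as well as the weaker smoothness at the $\Gamma_\infty$-cusp in (PF3), are inherited from the defining properties of $\mc V_s^{\omega*,\infty}$ and from Remark~\ref{issmooth}; here the normalization $c_T = 0$ is what makes (PF3) (rather than (PF4)) the correct statement in Situations (1a)/(3b). The functional equations (PF2) come from feeding the cocycle relation into the explicit transfer-operator formulas of Situations (1a)--(3b): each $\wt f_\alpha$ is a sum $\sum \tau_s(\cdots) f_{(\cdot)}$ whose terms telescope against $c_{gh} = \tau_s(h^{-1})c_g + c_h$ down the cycle, exactly reproducing $f_\alpha = \eps_\alpha c_b$ on $I_\alpha$. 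The components $f_\alpha$ with $\alpha \in \Sigma \smallsetminus \Sigma'$ are handled using the $\psi$ attached to $\Stab_\Gamma(v)$ as in the excerpt, and one checks the equations involving them using $c_p = \tau_s(p^{-1})\psi - \psi$.

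For \textbf{(III)}, composing the two maps in either order and comparing to the defining formulas shows they are mutually inverse: starting from $f$, the cocycle $c(f)$ has $c_b = \psi_{\alpha,f}$, whose restriction to $I_\alpha$ is $\eps_\alpha f_\alpha$ by \eqref{psialpha}, so $f([c(f)]) = f$; starting from $[c]$ with its normalized representative, $c(f([c]))$ agrees with $c$ on the generators in $\mc S$ and hence everywhere, and since both maps are visibly linear this gives the linear isomorphism. The main obstacle I expect is step \textbf{(I)}: proving that the finitely many prescribed values $c_b$ assemble into a \emph{well-defined} function on $\Gamma$ — that is, that all relations of $\Gamma$ (the $g^4 = \id$-type elliptic relations, the cycle relations, and commutation with $T$) are respected — and simultaneously that the result is \emph{parabolic}; this is where the geometry of the tesselation and condition (A) are genuinely used, and where one must carefully track the bookkeeping of the maps $\eps(\pch,h)$, $\cyl(\pch)$, and $\shmap$ to match the combinatorics of the cycles against the algebra of the cocycle. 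The analytic regularity claims, by contrast, are largely formal consequences of the structure of $\mc V_s^{\omega*,\infty}$ and Remark~\ref{issmooth}.
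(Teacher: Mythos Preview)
Your three-part structure matches the paper's, which splits the theorem into Propositions~\ref{ftoc} and \ref{ctof} and leaves the mutual-inverse check as implicit from the formulas. Part~(I) is essentially correct in outline, though two remarks: the invocation of \cite[Lemma~3.3]{Pohl_mcf_Gamma0p} for parabolicity is misplaced, since that lemma is a \emph{uniqueness} statement, not an existence one---the paper instead constructs the witness $\varphi$ explicitly from the triangle components $f_{1}, f_{n_j}$ of $f$ and verifies $c_p = \tau_s(p^{-1})\varphi - \varphi$ by a direct (and rather long) computation using $f = \mc L_{F,s} f$; and the claim that $\psi = 0$ works ``for $p$ conjugate to $T$'' is only correct for $p \in \Gamma_\infty$ itself (for a general conjugate one needs $\psi = -c_{g^{-1}}$), though checking one representative per cusp orbit suffices.

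There is, however, a genuine gap in part~(II), and it is precisely where the paper's argument differs most from yours. You assert that (PF1)---real-analyticity of $f_\alpha$ on all of $I_\alpha$---is ``inherited from the defining properties of $\mc V_s^{\omega*,\infty}$''. But elements of $\mc V_s^{\omega*,\infty}$ are only required to be real-analytic \emph{away from some finite set $E$}, and nothing in the cocycle or parabolicity axioms prevents $E$ from meeting the interior of $I_\alpha$. The paper closes this gap by invoking Theorem~\ref{Parab}: it takes the unique Maass cusp form $u$ whose associated cocycle (based at $\infty$) is the normalized representative $c$, writes
\[
 f_\alpha(r) = \pm \eps_\alpha \int_{r_\alpha}^\infty [u, R(r,\cdot)^s],
\]
and then appeals to \cite[Lemma~3.2]{Pohl_mcf_Gamma0p} to conclude real-analyticity on $\R\smallsetminus\{r_\alpha\}$, hence on $I_\alpha$. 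The same integral representation is then used for (PF2): closedness of the $1$-form $[u, R(r,\cdot)^s]$ lets one deform $\gamma_\alpha$ to the concatenation $g_1.\gamma_{\beta_1}, \ldots, g_\ell.\gamma_{\beta_\ell}$ dictated by Figures~\ref{sit1a}--\ref{sit3ab}, and \eqref{int_tau} converts this into the transfer-operator identity. Your purely algebraic telescoping for (PF2) may be workable, but without the integral formula you have no mechanism to force the non-analytic locus of $c_b$ (or of the $\psi$ attached to the cusps not equivalent to $\infty$) to sit at the boundary of $I_\alpha$ rather than inside it. In short, the passage through the Maass cusp form is not a detour here---it is what supplies the missing regularity.
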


The proof of Theorem~\ref{actualfinal} is split into Propositions~\ref{ftoc} and \ref{ctof} below.

\begin{prop}\label{ftoc}
If $f\in \FE_s^{\omega,\dec}(\Gamma;\fpch,\choices,\shmap)$, then $c(f)$ determines a unique element in $Z^1_\parab(\Gamma;\mc V_s^{\omega*,\infty})$.
\end{prop}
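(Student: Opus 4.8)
The plan is to construct from the finitely many values $c_b$, $b\in\mathcal S$, a genuine group cocycle on all of $\Gamma$, and then to verify it is parabolic. The starting observation is that $\mathcal S$ together with $T$ should generate $\Gamma$ (since the $\fpch$-tesselation is a fundamental region and the side-pairing elements generate the group; the $b$'s are built from exactly these side-pairing data), so by the cocycle relation $c_{gh} = \tau_s(h^{-1})c_g + c_h$ a cocycle extending $c$ is unique if it exists. For existence, I would first record the bookkeeping identities that the functions $\psi_{\alpha,f}$ satisfy on overlaps: by (PF2)--(PF4) together with Remark~\ref{issmooth}, whenever two components $C'_\alpha$, $C'_{\alpha'}$ are related by the dynamics via an element $h\in\Gamma$, the maps $\psi_{\alpha,f}$ transform into each other by $\tau_s(h^{-1})$ up to the already-established smoothness. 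These are precisely the relations needed to make the assignment $b\mapsto c_b$ compatible with the defining relations of $\Gamma$ (in the example, $\Gamma = \langle g,T\mid g^4=\id\rangle$, so one must check the single relation $g^4 = \id$ translates into a linear identity among the $c_b$'s, which is exactly what the third functional equation $f_3 = \sum_{j=1}^3 \tau_s(g^j)f_4$ encodes).

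Concretely I would proceed as follows. First, extend $c$ to the subgroup generated by $\mathcal S$ by declaring $c_{gh} := \tau_s(h^{-1})c_g + c_h$ along words in the generators, and show this is well-defined: it suffices to check that every relator of $\Gamma$ in the generators $\mathcal S$ maps to $0$, and these relators come in two flavours — the cusp relations $c_T = 0$ (built in) and the cycle relations coming from the rectangle-cycles and triangle-cycles in the discretization. For a rectangle cycle $\big((\pch_\ell,h_\ell)\big)_{\ell=1,\dots,k}$ the product $h_k\cdots h_1$ equals a power of $T$ (that is how the algorithm terminates), and the telescoping of the Situation (1a)/(1b) formulas for $\wt f_\alpha = f_\alpha$ gives exactly the corresponding linear relation among the $c_b$; similarly the triangle cycles give relations involving the chosen integer $m=\shmap(\pch,h)$ and $T^m$, which again reduce to $c_T = 0$. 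Second, having a well-defined cocycle $c\in Z^1(\Gamma;\mathcal V_s^{\omega*,\infty})$, check $c_b\in\mathcal V_s^{\omega*,\infty}$ for each $b$: by construction $c_b = \psi_{\alpha,f}$ is piecewise real-analytic (by (PF1)) with at most the single breakpoint $r_\alpha$, and it extends smoothly to $P^1(\R)$ by (PF4) directly, or by (PF3) plus Remark~\ref{issmooth} in the Situation (1a)/(3b) cases; hence $c_b$ is smooth on $\R$, extends smoothly to $P^1(\R)$, and is real-analytic off a finite set, i.e.\ lies in $\mathcal V_s^{\omega*,\infty}$. Third, verify parabolicity: for the cusp $\infty$ we have $c_T = 0 = \tau_s(T^{-1})0 - 0$, and for any other cuspidal point $v$ with stabilizer generator $p$, conjugate $p$ into $T$ by some $\sigma\in\Gamma$ (all cusps are $\Gamma$-inequivalent finitely many classes, each representable among the $r_\alpha$), so $c_p = \tau_s(p^{-1})(\tau_s(\sigma^{-1})0 \text{-type element})$ — more precisely, writing $p = \sigma^{-1}T'\sigma$ one gets $c_p = \tau_s(p^{-1})\psi - \psi$ with $\psi = -\tau_s(\sigma)c_\sigma$ up to the cocycle identity; the relevant $\psi$ lies in $\mathcal V_s^{\omega*,\infty}$ by the same regularity bookkeeping, using that the relevant $\psi_{\alpha,f}$ or $\tau_s(g)f_\beta$ pieces glue to a globally semi-analytic smooth vector. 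This is the place where (PF3)/(PF4) and the smoothness at $r_\alpha$ do real work.

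The main obstacle I expect is the well-definedness of the extension of $c$ to $\Gamma$ — that is, showing the cycle relations of the discretization are exactly the relators of $\Gamma$ (no more, no fewer) and that the functional equations (PF2) in Situations (1a)--(3b), together with the smoothness conditions (PF3)/(PF4), force the $c_b$ to satisfy precisely these relators. This is a combinatorial-geometric compatibility statement tying the Poincaré-type side-pairing and cycle structure of $\fpch$ to a presentation of $\Gamma$, and it is where the hypothesis (A) is genuinely used (it guarantees the summits sit in $\partial\mathcal K$ as non-vertices, which is what makes the triangle/rectangle decomposition and its cycles behave correctly). A secondary technical point is tracking the ambiguity in the choice of $b$ (well-defined only up to $\Gamma_\infty$ on the left): changing $b$ to $T^n b$ changes $c_b$ to $\tau_s(b^{-1})c_{T^n} + c_b = c_b$ since $c_T = 0$, so the construction is independent of that choice — this should be dispatched quickly but must be stated. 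Everything else (the $\tau_s$-equivariance translating to change of integration path, the real-analyticity off finite sets) is routine given the results quoted from \cite{BLZ_part2} and \cite{Pohl_mcf_Gamma0p}.
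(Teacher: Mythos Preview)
Your overall architecture --- extend $c$ from $\mathcal S$ to $\Gamma$ via the cocycle relation, check well-definedness against the relators coming from Poincar\'e's theorem, verify the values lie in $\mathcal V_s^{\omega*,\infty}$, then check parabolicity --- matches the paper's. The well-definedness sketch is in the right direction, though the verification of the rectangle-cycle relation is considerably more delicate than a single telescoping: the identity $\sum_\ell \tau_s(h_j^{-1}\cdots h_{j+\ell-1}^{-1}) c_{h_{j+\ell}} = 0$ is first established only on one half-line using (PF2), and then must be propagated to successive subintervals of $P^1(\R)$ by reapplying the argument with shifted indices (the paper devotes a ``Preliminary step'' and ``Main step'' to this). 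You should also expect separate order-$2$ relators from the Poincar\'e theorem (sides paired with themselves), which are not subsumed by the cycle relators; the paper treats these as their own cases.

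The genuine gap is in your parabolicity argument. You write $p = \sigma^{-1}T'\sigma$ with $\sigma\in\Gamma$ and $T'\in\Gamma_\infty$, and then manufacture $\psi$ from $c_\sigma$. This works only when the cusp $v$ fixed by $p$ is $\Gamma$-equivalent to $\infty$; for an inequivalent cusp (such as $0$ in the running example, where $\Gamma$ has two cusp classes) no such $\sigma$ exists in $\Gamma$, and the argument collapses. Parabolicity at those cusps is not a formal consequence of the cocycle relations together with $c_T = 0$ --- it genuinely depends on the structure of $f$. The paper handles it by taking the triangle $(\pch,h)\in\choices$ attached to the infinite vertex $v$, reading off a generator $p$ of $\Stab_\Gamma(v)$ from the side-pairing sequence around $v$, and defining $\varphi$ piecewise from the triangle components of $f$ (those based on the long sides of the first and last triangles in that sequence). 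The identity $c_p = \tau_s(p^{-1})\varphi - \varphi$ is then checked interval-by-interval on $\R$, each piece reducing to an instance of the Situation (2a)/(2b)/(3a)/(3b) equations in (PF2); membership of $\varphi$ in $\mathcal V_s^{\omega*,\infty}$ comes from (PF4). This is precisely where (PF4) and the triangle functional equations are consumed, and your proposal offers no substitute for it.
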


\begin{remark}
In order to extend $c \sceq c(f)$ to all of $\Gamma$ and to show that this extension is well-defined, unique and a $1$-cocycle, we want to apply the Poincar\'e Fundamental Polyhedron Theorem (see e.g.\@ \cite{Maskit}) to the (closed) fundamental region $\bigcup\fpch$ for $\Gamma$. The Poincar\'e Theorem in its usual form however may only be applied if $\bigcup\fpch$ is connected. For each $\pch\in\fpch$ we can use an appropriate $T$-shift, say $T^{n(\pch)}$, such that the union over the family
\[
 \fpch'\sceq \{T^{n(\pch)}\pch \mid \pch\in\fpch\}
\]
provides a (connected, closed) fundamental domain for $\Gamma$. The side-pairing elements of the non-vertical sides of the elements in $\fpch'$ are also $T$-shifted compared to the original ones. If we apply the Poincar\'e Theorem to $\bigcup\fpch'$ to deduce the relations between the side-pairing elements, the $T$-shifts will cancel and we will find the same relations as if we had applied the Poincar\'e Theorem to $\bigcup\fpch$. In short, for our purposes we may apply the Poincar\'e Theorem to $\bigcup\fpch$ if we add $T$ to the set of generators even though it need not be a side-pairing element.
\end{remark}

\begin{proof}[Proof of Proposition~\ref{ftoc}]
Let $c\sceq c(f)$. The Poincar\'e Theorem shows that $\Gamma$ is generated (as a group) by $\mc S$. Therefore, using the definition of $c$ on $\mc S$ we can extend $c$ to all of $\Gamma$ via the cocycle relation.  If this extension (which we also call $c$) is well-defined, then it is unique. Moreover, the properties (PF3) and (PF4) in Section~\ref{sec_per} and Remark~\ref{issmooth} yield that $c$ takes values in $\mc V_s^{\omega*,\infty}$. To show that $c$ is well-defined and indeed defines a $1$-cocycle we proceed in the following steps. The Poincar\'e Theorem yields that these are sufficient.

\begin{enumerate}[(i)]
\item\label{wd1} If $b_1 = T^nb_2$ for some $b_1,b_2\in\mc S$, $n\in\Z$, then we have to show that 
\[
 c_{b_1} = \tau_s(b_2^{-1})c_{T^n} + c_{b_2}.
\]
Now $c_T=0$ implies $c_{T^n} = 0$, and by definition $c_{b_1} = c_{b_2}$. Thus, this condition is satisfied. Moreover, we may use $c_{T^ng} = c_g$ in any of the following considerations.
\item Suppose that we have $b,b^{-1}\in\mc S\smallsetminus\{T\}$ (or more general, $b,T^nb^{-1}\in\mc S$). We need to show $\tau_s(b)c_b = -c_{b^{-1}}$. There exist $\alpha,\beta\in\Sigma'$ and $g\in\Gamma$ such that $C'_\alpha, g.C'_\beta$ are both based on the geodesic segment $(b^{-1}.\infty,\infty)$ but are disjoint. Moreover, $b.C'_\alpha, bg.C'_\beta$ are the two translates of some components of $C'$ which are both based on the geodesic segment $(b.\infty,\infty)$, see Figure~\ref{inversepic}. 

\begin{figure}[h]
\begin{center}
\includegraphics{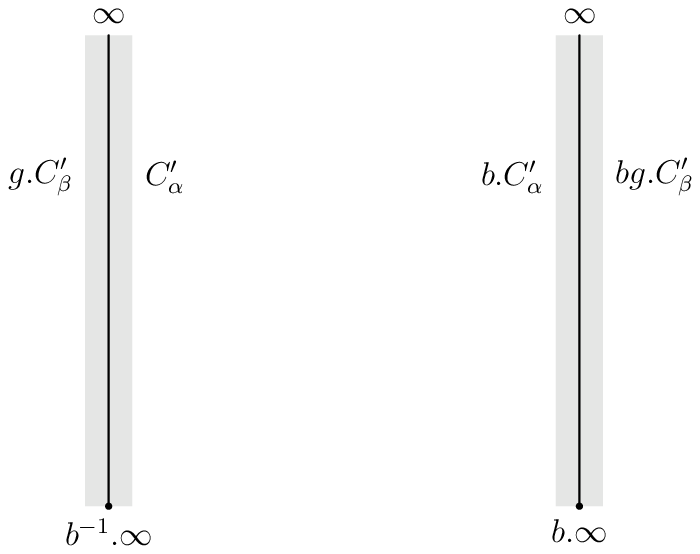}
\end{center}
\caption{Situation for $b,b^{-1}\in \mc S\smallsetminus\{T\}$}
\label{inversepic}
\end{figure}

Thus, $bg = T^n$ for some $n\in\Z$. Using \eqref{wd1} we may suppose $n=0$. Then
\[
 c_b = \psi_{\alpha, f} = 
\begin{cases}
\eps_\alpha f_\alpha & \text{on $\langle b^{-1}.\infty, \eps_\alpha\infty\rangle$}
\\
-\eps_\alpha\tau_s(b^{-1})f_\beta & \text{on $\langle b^{-1}.\infty, -\eps_\alpha\infty\rangle$}
\end{cases}
\]
and
\[
c_{b^{-1}} = \psi_{\beta,f} =
\begin{cases}
\eps_\beta f_\beta & \text{on $\langle b.\infty, \eps_\beta\infty\rangle$}
\\
-\eps_\beta \tau_s(b)f_\alpha & \text{on $\langle b.\infty, -\eps_\beta\infty\rangle$.}
\end{cases}
\]
Since $\eps_\alpha = \eps_\beta \seqc\eps$ it follows
\begin{align*}
\tau_s(b)c_b &= 
\begin{cases} 
\eps\tau_s(b) f_\alpha & \text{on $\langle b.\infty, -\eps\infty\rangle$}
\\
-\eps f_\beta & \text{on $\langle b.\infty, \eps\infty\rangle$} 
\end{cases}
\\
& = -c_{b^{-1}}.
\end{align*}
\item Let $(\pch,h)\in\choices$ with $\pch$ being a rectangle and let $\big((\pch_j,h_j)\big)_{j=1,\ldots,k}$ be the cycle determined by $(\pch,h)$. We have to show that 
\[
 c_{h_{j+k-1}\cdots h_{j+1} h_{j}} = 0
\]
for some $j\in\{1,\ldots, k\}$, where we understand the indices of the $h_\ell$ modulo $k$. 
Let $\ell\in\{1,\ldots, k\}$. Then $h_\ell\in\mc S$. Let $\alpha_\ell\sceq \big( (\pch,h),\ell\big)$ and $(\beta_\ell,g_\ell)\in\Sigma\times\Gamma$ be the tuple assigned to $\alpha_\ell$. Further set $\eps_\ell\sceq \eps_{\alpha_\ell}$ and $f_\ell\sceq f_{\alpha_\ell}$. Then 
\[
c_{h_\ell} =
\begin{cases}
\eps_\ell f_\ell & \text{on $\langle h_\ell^{-1}.\infty,\eps_\ell\infty\rangle$}
\\
-\eps_\ell\tau_s(g_\ell)f_{\beta_\ell} & \text{on $\langle h_\ell^{-1}.\infty, -\eps_\ell\infty\rangle$.}
\end{cases}
\]
By definition, we have
\[
 c_{h_{j+k-1}\cdots h_j} = \sum_{\ell=0}^{k-1} \tau_s(h_j^{-1}\cdots h_{j+\ell-1}^{-1}) c_{h_{j+\ell}}.
\]
Therefore we have to show
\begin{equation}\label{toshow}
c_{h_j} = - \sum_{\ell=0}^{k-2} \tau_s(h_j^{-1}\cdots h_{j+\ell}^{-1}) c_{h_{j+\ell+1}}.
\end{equation}
\textbf{Preliminary step:} Suppose that we are in Situation (1a). We claim that \eqref{toshow} is satisfied on $\langle h_{j+k-1}.\infty, \eps_j\infty\rangle$. There exist $j\in\{1,\ldots,k\}$ and $n_j\in\Z$ such that $g_{j+k-1} = h_{j+k-2}\cdots h_jT^{-n_j}$ (for Figure~\ref{sit1a} this means that $\alpha = \beta_{j+k-1}$ and $n=n_j$). Note that 
\[
 h_j^{-1}\cdots h_{j+k-2}^{-1} = h_{j+k-1}
\]
and $g_{j+k-1}=h_{j+k-1}^{-1}T^{-n_j} \in \mc S$. Further note that $\eps_\ell = \eps_{\beta_{j+k-1}}\seqc \eps$ for all $\ell=1,\ldots,k$. Then 
\[
c_{h_{j+k-1}^{-1}T^{-n_j}} = 
\begin{cases}
\eps f_{\beta_{j+k-1}} & \text{on $\langle T^{n_j}h_{j+k-1}.\infty,\eps\infty\rangle$}
\\
-\eps \tau_s(T^{n_j}h_{j+k-1})f_{j+k-1} & \text{on $\langle T^{n_j}h_{j+k-1}.\infty, -\eps\infty\rangle$.}
\end{cases}
\]
From $f$ being a $1$-eigenfunction of $\mc L_{F,s}$ it follows that 
\[
 f_{\beta_{j+k-1}} = \sum_{\ell=0}^{k-2} \tau_s(T^{n_j}h_j^{-1}\cdots h_{j+\ell-1}^{-1}) f_{j+\ell}.
\]
On $\langle T^{n_j}h_{j+k-1}.\infty,\eps\infty\rangle$ we have
\begin{align*}
c_{h_{j+k-1}^{-1}T^{-n_j}} & = \eps f_{\beta_{j+k-1}} = \sum_{\ell=0}^{k-2} \tau_s(T^{n_j}h_j^{-1}\cdots h_{j+\ell-1}^{-1}) \eps f_{j+\ell}
\\
& = \sum_{\ell=0}^{k-2} \tau_s(T^{n_j}h_j^{-1}\cdots h_{j+\ell-1}^{-1}) c_{h_{j+\ell}}.
\end{align*}
Now
\[
 c_{h_{j+k-1}^{-1}T^{-n_j}} = -\tau_s(T^{n_j}) \tau_s(h_{j+k-1})c_{h_{j+k-1}}
\]
yields
\[
 0 = \sum_{\ell=0}^{k-1} \tau_s(h_j^{-1}\cdots h_{j+\ell-1}^{-1}) c_{h_{j+\ell}} \quad\text{on $\langle h_{j+k-1}.\infty, \eps\infty\rangle$.}
\]
\textbf{Main step:} Suppose now that we are in Situation (1b), hence there exist $j\in\{1,\ldots,k\}$ and $n_j\in\Z$ such that $g_j = T^{-n_j}$ (for Figure~\ref{sit1b} this means that $\alpha = \beta_j$ and $n=n_j$). Then $h_jT^{-n_j} \in \mc S$ and
\[
 c_{h_jT^{-n_j}} =
\begin{cases}
\eps_{\beta_j} f_{\beta_j} & \text{on $\langle T^{n_j}h_j^{-1}.\infty, \eps_{\beta_j}.\infty\rangle$}
\\
-\eps_{\beta_j}\tau_s(T^{n_j})f_j & \text{on $\langle T^{n_j}h_j^{-1}.\infty, -\eps_{\beta_j}.\infty\rangle$.}
\end{cases}
\]
From $f$ being a $1$-eigenfunction of $\mc L_{F,s}$ it follows that 
\[
 f_{\beta_j} = \sum_{\ell=0}^{k-2}\tau_s(T^{n_j}h_j^{-1}\cdots h_{j+\ell}^{-1})f_{j+\ell+1}.
\]
Note that $\eps_{\beta_j} = -\eps_\ell \seqc \eps'$ for $\ell=1,\ldots, k$. On $\langle T^{n_j}h_j^{-1}.\infty, \eps'\infty\rangle$ we have
\begin{align*}
c_{h_jT^{-n_j}} & = \eps' f_{\beta_j} = -\sum_{\ell=0}^{k-2} \tau_s(T^{n_j}h_j^{-1}\cdots h_{j+\ell}^{-1})\eps_{j+\ell+1}f_{j+\ell+1}
\\
& = -\sum_{\ell=0}^{k-2}\tau_s(T^{n_j}h_j^{-1}\cdots h_{j+\ell}^{-1})c_{h_{j+\ell+1}}.
\end{align*}
Since 
\[
 c_{h_jT^{-n_j}} = \tau_s(T^{n_j})c_{h_j},
\]
the equality \eqref{toshow} is verified on $\langle h_j^{-1}.\infty, \eps'\infty\rangle$. We now show that \eqref{toshow} holds on 
\[
 h_j^{-1}.\langle h_{j+1}^{-1}.\infty,\eps'\infty\rangle = 
\begin{cases}
(h_j^{-1}h_{j+1}^{-1}.\infty,h_j^{-1}.\infty) & \text{if $\eps'=+1$}
\\
(h_j^{-1}.\infty, h_j^{-1}h_{j+1}^{-1}.\infty) & \text{if $\eps'=-1$.}
\end{cases}
\]
If $g_{j+1} = T^{-n_{j+1}}$ for some $n_{j+1}\in\Z$, then we process as before to see that 
\begin{align*}
c_{h_{j+1}} & = - \sum_{\ell=0}^{k-2} \tau_s(h_{j+1}^{-1}\cdots h_{j+\ell+1}^{-1})c_{h_{j+\ell+2}}
\\
& = - \tau_s(h_{j+1}^{-1}\cdots h_{j+k-1}^{-1})c_{h_j} - \sum_{\ell=1}^{k-2} \tau_s(h_{j+1}^{-1}\cdots h_{j+\ell}^{-1})c_{h_{j+\ell+1}}
\end{align*}
on $\langle h_{j+1}^{-1}.\infty, \eps'\infty\rangle$. Since
\[
 h_j = h_{j+1}^{-1}\cdots h_{j+k-1}^{-1},
\]
an application of $\tau_s(h_j^{-1})$ on both sides shows \eqref{toshow} on $h_j^{-1}.\langle h_{j+1}^{-1}.\infty,\eps'\infty\rangle$.

We now consider the case that $g_{j+1} \notin \Gamma_\infty$. By Situations (1a) and (3b), 
\[
 g_{j+1}.\infty = h_{j+1}^{-1}.\infty
\]
and hence $g_{j+1} = h_{j+1}^{-1}T^m$ for some $m\in\Z$. This means we are in Situation (1a) with $j+2$ instead of $j$ and $-m$ instead of $n_j$. The preliminary step from above shows that on $\langle h_{j+1}.\infty, -\eps'\infty\rangle$ we have
\begin{align*}
0 & = \sum_{\ell=0}^{k-1} \tau_s(h_{j+2}^{-1}\cdots h_{j+\ell+1}^{-1}) c_{h_{j+\ell+2}}
\\
& = \sum_{\ell=0}^{k-2} \tau_s(h_{j+2}^{-1}\cdots h_{j+\ell+1}^{-1}) c_{h_{j+\ell+2}} + \tau_s(h_{j+2}^{-1}\cdots h_{j+k}^{-1})c_{h_{j+1}}.
\end{align*}
Using $h_{j+2}^{-1}\cdots h_{j+k}^{-1} = h_{j+1}$ and applying $\tau_s(h_{j+1}^{-1})$ we find
\begin{align*}
0 &= \sum_{\ell=1}^{k-1} \tau_s(h_{j+1}^{-1} h_{j+2}^{-1}\cdots h_{j+\ell}^{-1})c_{h_{j+\ell+1}} + c_{h_{j+1}}
\\
& = \sum_{\ell=0}^{k-2} \tau_s(h_{j+1}^{-1}\cdots h_{j+\ell}^{-1}) c_{h_{j+\ell+1}} + \tau_s(h_{j+1}^{-1}\cdots h_{j+k-1}^{-1})c_{h_j}
\end{align*}
on $h_{j+1}^{-1}.\langle h_{j+1}.\infty, -\eps'\infty\rangle = \langle h_{j+1}^{-1}.\infty, \eps'\infty\rangle$. Applying $\tau_s(h_j^{-1})$ shows \eqref{toshow} on $h_j^{-1}.\langle h_{j+1}^{-1}.\infty,\eps'\infty\rangle$.

Iterating this argument, eventually establishes \eqref{toshow} on $P^1(\R)$ if we start in Situation (1b). The proof for a start in Situation (1a) is analogous.

\item\label{wd4} Suppose that we have an order $2$ relation of the form $(T^mh)^2 = \id$ constructed by the Poincar\'e Theorem from a situation as indicated in Figure~\ref{pic_wd4}, where $\pch\in\fpch$ is a rectangle, $h\in \Gamma$ a side-pairing element and $m\in\Z$. 

\begin{figure}[h]
\begin{center}
\includegraphics{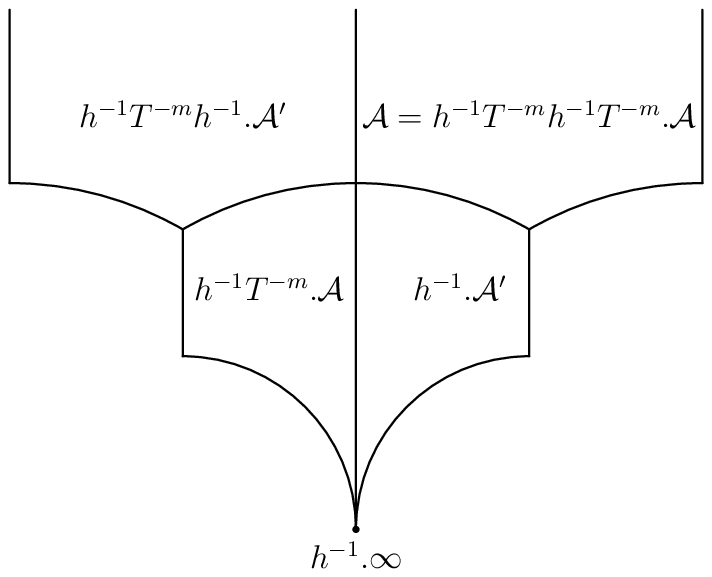}
\end{center}
\caption{Order $2$ relation with $\pch$ being a rectangle}
\label{pic_wd4}
\end{figure}

We have to show 
\begin{equation}\label{toshow2}
 c_{(T^mh)^2} = 0.
\end{equation}

Suppose first that $(\pch, h)$ is an element of a cycle determined by an element in $\choices$. Without loss of generality, we may assume that $(\pch, h) \in \choices$. Then $h\in \mc S$. Let $\alpha\sceq \big((\pch,h),1\big)$. Then $C'_\alpha$ and $h^{-1}T^{-m}C'_\alpha$ are both based on the geodesic segment $(h^{-1}.\infty,\infty)$ and are disjoint. Thus
\[
 c_h = 
\begin{cases}
\eps_\alpha f_\alpha & \text{on $\langle h^{-1}.\infty, \eps_\alpha\infty\rangle$}
\\
-\eps_\alpha \tau_s(h^{-1}T^{-m})f_\alpha & \text{on $\langle h^{-1}.\infty, -\eps_\alpha\infty\rangle$.}
\end{cases}
\]
By definition and since $c_T=0$ we have
\[
c_{(T^mh)^2} = \tau_s(h^{-1}T^{-m})c_h + c_h.
\]
Now an easy calculation establishes \eqref{toshow2}.

Suppose now that $(\pch,h)$ is not an element of a cycle determined by some element in $\choices$. Then, for some $\pch'\in\fpch$, the pair $(\pch',h^{-1})$ appears in a cycle determined by some element in $\choices$ and we can apply the previous argument to show
\[
 c_{(h^{-1}T^{-m})^2} = 0.
\]
\item Suppose that we have an order $2$ relation of the form $(T^nh)^2 = \id$ constructed by the Poincar\'e Theorem from a situation as indicated in Figure~\ref{pic_wd5}, where $\pch\in\fpch$ is a triangle, $h\in \Gamma$ a side-pairing element and $n\in\Z$. Here one shows along the lines of \eqref{wd4} that $c$ vanishes on $(T^nh)^2$.

\begin{figure}[h]
\begin{center}
\includegraphics{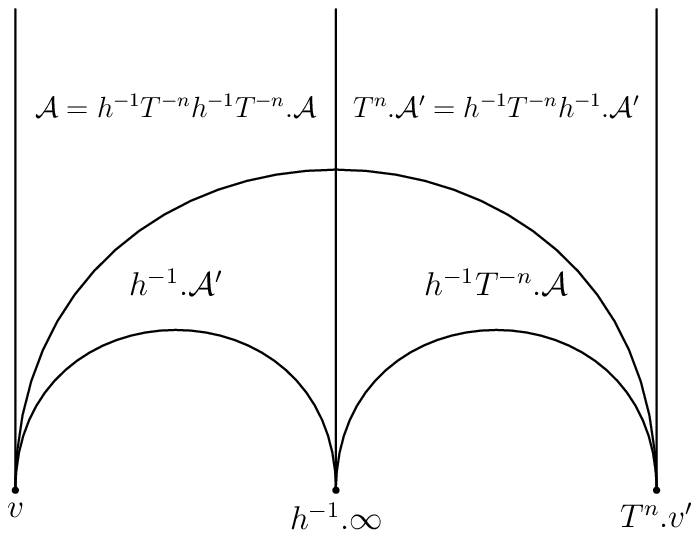}
\end{center}
\caption{Order $2$ relation with $\pch$ being a triangle}
\label{pic_wd5}
\end{figure}

\end{enumerate}
This shows that $c$ is a $1$-cocycle. It remains to prove that $c$ is parabolic. Let $(\pch,h)\in\choices$ with $\pch$ being a triangle. Let $v$ be the infinite vertex of $\mc K$ to which $\pch$ is associated. By the Poincar\'e Theorem there is a finite sequence in $\fpch\times\Gamma$ determining a generator of $\Stab_\Gamma(v)$. This sequence is constructed as follows. Let $\pch_1 \sceq \pch$, $h_1\sceq h$ and let $\big( (\pch_1,h_1), (\pch'_1, h_1^{-1})\big)$ be the cycle determined by $(\pch_1,h_1)$. Iteratively for $j=2,3,\ldots$ we find $n_{j-1}\in\Z$ and $(\pch_j,h_j)\in \fpch\times\Gamma$, $\pch_j$ a triangle with non-vertical side in $I(h_j)$ such that $T^{n_{j-1}}\pch_j$ coincides with $\pch'_{j-1}$ on their long side (and only there). Let $\pch'_j$ be the element in $\fpch$ such that $\big( (\pch_j,h_j), (\pch'_j, h_j^{-1})\big)$ is the cycle determined by $(\pch_j,h_j)$. This construction stops when there exists $n_j\in\Z$ such that $T^{n_j}\pch'_j$ coincides with $\pch_1$ precisely on their long 
side. Then 
\[
 p\sceq T^{n_j}h_jT^{-n_{j-1}}h_{j-1}T^{-n_{j-2}}\cdots h_1
\]
is a generator of $\Stab_\Gamma(v)$ and as such a parabolic element. Let $C'_1$ be the component of $C'$ which is based on the long side of $\pch_1$ and whose elements point into $\pch_1$, and let $C'_{n_j}$ be the component which is based on the long side of $\pch_{n_j}$ and whose elements point into $\pch_{n_j}$. Further let $f_1, f_{n_j}$ denote the corresponding components of $f$. Define 
\[
 \varphi\sceq 
\begin{cases}
-\eps f_1 & \text{on $\langle v, \eps\infty\rangle$}
\\
\eps\tau_s(T^{n_j})f_{n_j} & \text{on $\langle v, -\eps\infty\rangle$.}
\end{cases}
\]
By (PF4), $\varphi$ extends smoothly to $P^1(\R)$. Using $f=\mc L_{F,s}f$ one proves in analogy to the arguments above that 
\[
 c_p = \tau_s(p^{-1})\varphi - \varphi.
\]
Note that each cusp of $\Gamma\backslash\h$ is represented by some of these infinite vertices.
To illustrate these arguments we perform them for the situation shown in Figures~\ref{parabsett}-\ref{parabglue}. The Figure~\ref{parabhelp} is an intermediate step to construct the lower triangle with vertices $v_1, h_1^{-1}T^{n_1}h_2^{-1}.\infty, h_1^{-1}.\infty$ in Figure~\ref{parabglue}. We assume here that $(\pch_1,h_1), (\pch_2,h_2)\in\choices$, let $m_1\sceq m(\pch_1,h_1)$, $m_2\sceq m(\pch_2,h_2)$ and use obvious abbreviations for the notation. The necessary steps for the proof and their validity can actually be read off these figures. The general situation is proved in exactly the same way by iterating this specific case sufficiently often. In our case, the stabilizer of $v_1$ is generated by 
\[
 T^{n_2}h_2T^{-n_1}h_1.
\]
\begin{figure}[h]
\begin{center}
\includegraphics{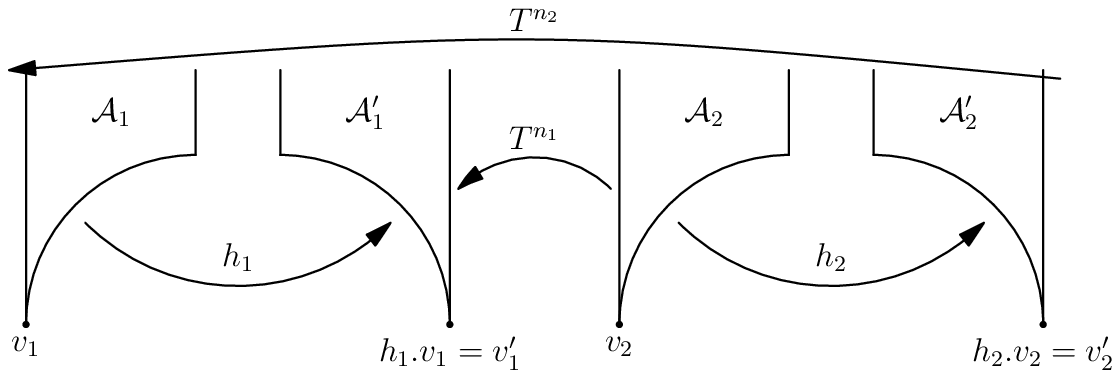}
\end{center}
\caption{Construction of ``cycle'' at $v_1$}
\label{parabsett}
\end{figure}

\begin{figure}[h]
\begin{center}
\includegraphics{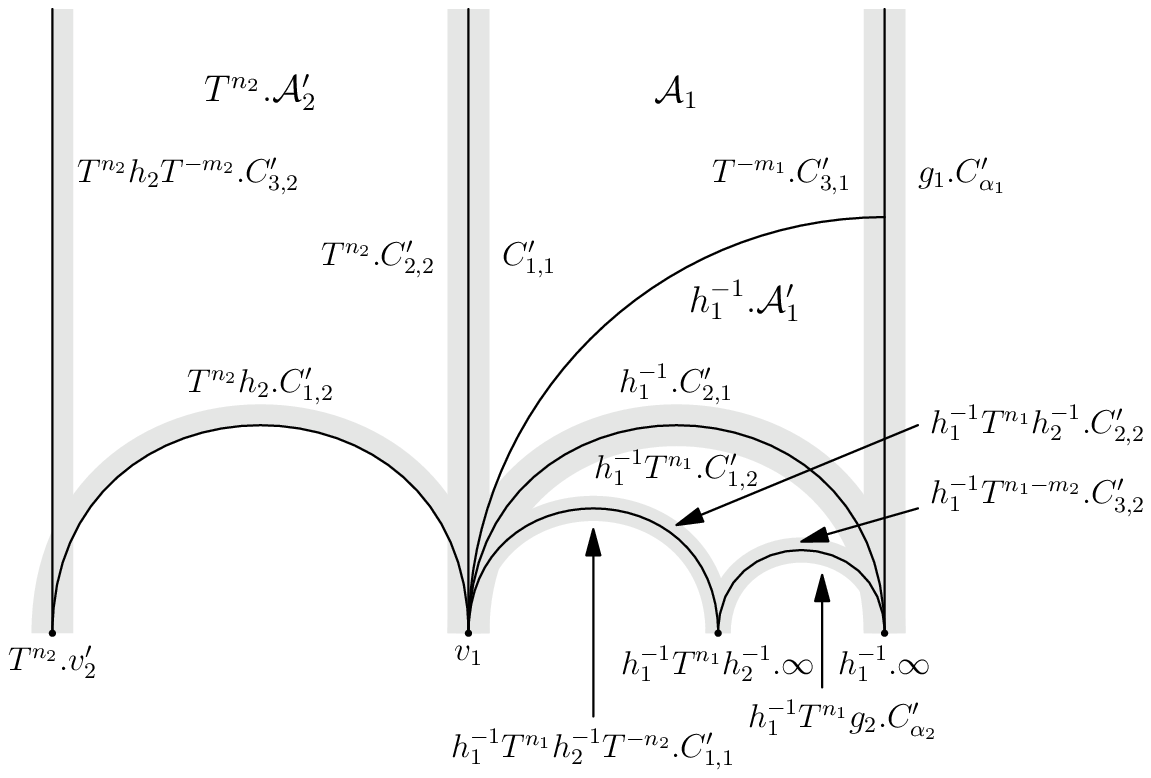}
\end{center}
\caption{Situation locally at $v_1$}
\label{parabglue}
\end{figure}

\begin{figure}[h]
\begin{center}
\includegraphics{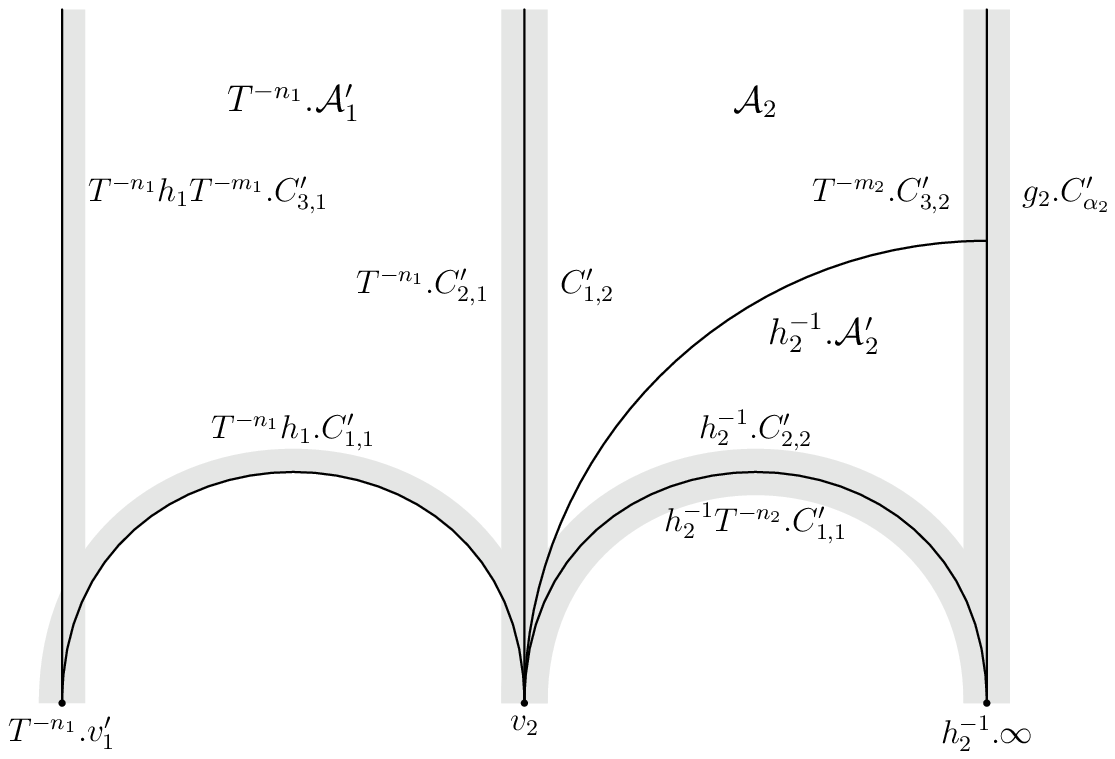}
\end{center}
\caption{Intermediate step in construction}
\label{parabhelp}
\end{figure}

We set
\[
\psi\sceq
\begin{cases}
-f_{1,1} & \text{on $(v_1,\infty)$}
\\
\tau_s(T^{n_2})f_{2,2} & \text{on $(-\infty, v_1)$}
\end{cases}
\]
and claim that 
\[
 c_{T^{n_2}h_2T^{-n_1}h_1} = \tau_s(h_1^{-1}T^{n_1}h_2^{-1}T^{-n_2})\psi - \psi.
\]
From the cocycle relation and $c_T = 0$ it follows that
\[
 c_{T^{n_2}h_2T^{-n_1}h_1} = \tau_s(h_1^{-1}T^{n_1})c_{h_2} + c_{h_1}.
\]
Since 
\[
 c_{h_1} = 
\begin{cases}
 \tau_s(g_1)f_{\alpha_1} & \text{on $(h_1^{-1}.\infty,\infty)$}
\\
-\tau_s(T^{-m_1})f_{3,1} & \text{on $(-\infty, h_1^{-1}.\infty)$}
\end{cases}
\]
and
\[
c_{h_2} =
\begin{cases}
\tau_s(g_2)f_{\alpha_2} & \text{on $(h_2^{-1}.\infty,\infty)$}
\\
-\tau_s(T^{-m_2})f_{3,2} & \text{on $(-\infty,h_2^{-1}.\infty)$}
\end{cases}
\]
we have
\[
 \tau_s(h_1^{-1}T^{n_1})c_{h_2} = 
\begin{cases}
\tau_s(h_1^{-1}T^{n_1}g_2)f_{\alpha_2} & \text{on $(h_1^{-1}T^{n_1}h_2^{-1}.\infty, h_1^{-1}.\infty)$}
\\
-\tau_s(h_1^{-1}T^{n_1-m_2})f_{3,2} & \text{on $(-\infty, h_1^{-1}T^{n_1}h_2^{-1}.\infty) \cup (h_1^{-1}.\infty,\infty)$}
\end{cases}
\]
and hence
\begin{align*}
&\tau_s(h_1^{-1}T^{n_1})c_{h_2} + c_{h_1} 
\\
& \quad = 
\begin{cases}
-\tau_s(h_1^{-1}T^{n_1-m_2})f_{3,2} - \tau_s(T^{-m_1})f_{3,1} & \text{on $(-\infty, h_1^{-1}T^{n_1}h_2^{-1}.\infty)$}
\\
\tau_s(h_1^{-1}T^{n_1}g_2)f_{\alpha_2} - \tau_s(T^{-m_1})f_{3,1} & \text{on $(h_1^{-1}T^{n_1}h_2^{-1}.\infty, h_1^{-1}.\infty)$}
\\
-\tau_s(h_1^{-1}T^{n_1-m_2})f_{3,2} + \tau_s(g_1)f_{\alpha_1} & \text{on $(h_1^{-1}.\infty,\infty)$.}
\end{cases}
\end{align*}
On the other side we have
\begin{align*}
&\tau_s(h_1^{-1}T^{n_1}h_2^{-1}T^{-n_2})\psi -\psi 
\\
& \quad =
\begin{cases}
\tau_s(h_1^{-1}T^{n_1}h_2^{-1})f_{2,2} - \tau_s(T^{n_2})f_{2,2} & \text{on $(-\infty, v_1)$}
\\
-\tau_s(h_1^{-1}T^{n_1}h_2^{-1}T^{-n_2})f_{1,1} + f_{1,1} & \text{on $(v_1,h_1^{-1}T^{n_1}h_2^{-1}.\infty)$}
\\
\tau_s(h_1^{-1}T^{n_1}h_2^{-1})f_{2,2} + f_{1,1} & \text{on $(h_1^{-1}T^{n_1}h_2^{-1}.\infty,\infty)$.}
\end{cases}
\end{align*}
Therefore we have to show:

\begin{enumerate}[(i)]
\item\label{tri1} on $(-\infty,v_1)$:
\[
 \tau_s(h_1^{-1}T^{n_1}h_2^{-1})f_{2,2} - \tau_s(T^{n_2})f_{2,2} = -\tau_s(h_1^{-1}T^{n_1-m_2})f_{3,2} - \tau_s(T^{-m_1})f_{3,1},
\]
\item\label{tri2} on $(v_1,h_1^{-1}T^{n_1}h_2^{-1}.\infty)$:
\[
-\tau_s(h_1^{-1}T^{n_1}h_2^{-1}T^{-n_2})f_{1,1} + f_{1,1} = -\tau_s(h_1^{-1}T^{n_1-m_2})f_{3,2} - \tau_s(T^{-m_1})f_{3,1},
\]
\item\label{tri3} on $(h_1^{-1}T^{n_1}h_2^{-1}.\infty, h_1^{-1}.\infty)$:
\[
 \tau_s(h_1^{-1}T^{n_1}h_2^{-1})f_{2,2} + f_{1,1} = \tau_s(h_1^{-1}T^{n_1}g_2)f_{\alpha_2} - \tau_s(T^{-m_1})f_{3,1},
\]
\item\label{tri4} on $(h_1^{-1}.\infty,\infty)$:
\[
 \tau_s(h_1^{-1}T^{n_1}h_2^{-1})f_{2,2} + f_{1,1} = -\tau_s(h_1^{-1}T^{n_1-m_2})f_{3,2} + \tau_s(g_1)f_{\alpha_1}.
\]
\end{enumerate}
From $f=\mc L_{F,s}f$ it follows
\begin{align}
\label{eigena}\tag{a} f_{2,2} &= \tau_s(T^{-n_2}h_1^{-1})f_{2,1} + \tau_s(T^{n_2-m_1})f_{3,1} && \text{on $(-\infty, v'_2) = (-\infty, h_2.v_2)$,}
\intertext{and}
\label{eigenb}\tag{b} f_{2,1} &= \tau_s(T^{n_1}h_2^{-1}) f_{2,2} + \tau_s(T^{n_1-m_2})f_{3,2} && \text{on $(-\infty, v'_1) = (-\infty, T^{n_1}.v_2)$.}
\end{align}
Then \eqref{eigenb} is equivalent to 
\[
 \tau_s(T^{-n_2}h_1^{-1})f_{2,1} = \tau_s(T^{-n_2}h_1^{-1}T^{n_1}h_2^{-1})f_{2,2} + \tau_s(T^{-n_2}h_1^{-1}T^{n_1-m_2})f_{3,2}
\]
on $(-\infty, T^{-n_2}.v_1) \cup (h_1^{-1}.\infty,\infty)$ since $h_1^{-1}T^{n_1}.v_2 = v_1$. Plugging this into \eqref{eigena} gives
\[
 f_{2,2} = \tau_s(T^{-n_2}h_1^{-1}T^{n_1}h_2^{-1})f_{2,2} + \tau_s(T^{-n_2}h_1^{-1}T^{n_1-m_2})f_{3,2} + \tau_s(T^{n_2-m_1})f_{3,1}
\]
on $(-\infty, T^{-n_2}h_1^{-1}T^{n_1}.v_2)$. Since $h_1^{-1}T^{n_1}.v_2 = v_1$, this proves \eqref{tri1}.

For the proof of \eqref{tri2} we use that $f=\mc L_{F,s}f$ provides
\begin{align}
\label{eigenc}\tag{c} f_{1,1} &= \tau_s(T^{n_2}h_2)f_{1,2} + \tau_s(T^{n_2}h_2T^{-m_2})f_{3,2} && \text{on $(v_1,\infty)$,} \intertext{and}
\label{eigend}\tag{d} f_{1,2} &= \tau_s(T^{-n_1}h_1)f_{1,1} + \tau_s(T^{-n_1}h_1T^{-m_1})f_{3,1} && \text{on $(v_2,\infty)$.}
\end{align}
Then \eqref{eigenc} is equivalent to 
\[
 \tau_s(h_1^{-1}T^{n_1}h_2^{-1}T^{-n_2})f_{1,1} = \tau_s(h_1^{-1}T^{n_1})f_{1,2} + \tau_s(h_1^{-1}T^{n_1-m_2})f_{3,2}
\]
on $h_1^{-1}T^{n_1}h_2^{-1}T^{-n_2}.(v_1,\infty) = (v_1, h_1^{-1}T^{n_1}h_2^{-1}.\infty)$. Equality \eqref{eigend} is equivalent to 
\begin{equation}\label{eigend'}\tag{d'}
 \tau_s(h_1^{-1}T^{n_1})f_{1,2} = f_{1,1} + \tau_s(T^{-m_1})f_{3,1}
\end{equation}
on $(v_1,h_1^{-1}.\infty)$. Combining these two equalities shows \eqref{tri2}. 

To prove \eqref{tri3} we note that $f=\mc L_{F,s}f$ yields
\begin{equation}\label{eigene}\tag{e}
 \tau_s(g_2) f_{\alpha_2} = \tau_s(h_2^{-1})f_{2,2} + f_{1,2}
\end{equation}
on $(h_2^{-1}.\infty,\infty)$. This is equivalent to 
\[
 \tau_s(h_1^{-1}T^{n_1}g_2)f_{\alpha_2} = \tau_s(h_1^{-1}T^{n_1}h_2^{-1})f_{2,2} + \tau_s(h_1^{-1}T^{n_1})f_{1,2}
\]
on $(h_1^{-1}T^{n_1}h_2^{-1}.\infty,h_1^{-1}.\infty)$. Combining this with \eqref{eigend'} proves \eqref{tri3}.

For the proof of \eqref{tri4} we remark that $f=\mc L_{F,s}f$ shows
\begin{equation}\label{eigenf}\tag{f}
\tau_s(g_1)f_{\alpha_1} = f_{1,1} + \tau_s(h_1^{-1})f_{2,1}
\end{equation}
on $(h_1^{-1}.\infty,\infty)$. This together with \eqref{eigenb} proves \eqref{tri4}. With this the proof is finally complete.
\end{proof}

\begin{prop}\label{ctof}
If $[c]\in H^1_\parab(\Gamma;\mc V_s^{\omega*,\infty})$, then $f([c])\in \FE_s^{\omega,\dec}(\Gamma;\fpch,\choices,\shmap)$.
\end{prop}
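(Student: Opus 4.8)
The plan is to check that the function vector $f:=f([c])$ satisfies (PF1)--(PF4) of Section~\ref{sec_per}, where throughout $c$ denotes the distinguished representative of $[c]$ with $c_T=0$ fixed in the construction preceding Theorem~\ref{actualfinal}. First one verifies that $f$ is well defined (equivalently, depends only on $[c]$): for $\alpha\in\Sigma'$, two triples $(\beta,g,b)$ and $(\beta,g,\tilde b)$ assigned to $\alpha$ differ by $\tilde b=T^n b$, and then $c_{\tilde b}=\tau_s(b^{-1})c_{T^n}+c_b=c_b$ by the cocycle relation and $c_T=0$; for $\alpha\in\Sigma\smallsetminus\Sigma'$ the element $\psi$ with $c_p=\tau_s(p^{-1})\psi-\psi$ is unique by \cite[Lemma~3.3]{Pohl_mcf_Gamma0p}. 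It then remains to establish (PF1)--(PF4).

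For (PF1) one must show that each $f_\alpha$, a priori only the restriction to the open interval $I_\alpha$ of a semi-analytic smooth vector (hence real-analytic merely off a finite set), is actually real-analytic on all of $I_\alpha$. I would obtain this from Theorem~\ref{Parab}: represent $[c]$ by a Maass cusp form $u$ and realize the $c_T=0$ representative by the integral transform with cuspidal base point $z_0=\infty$, so that $c_g(r)=\int_{g^{-1}.\infty}^{\infty}[u,R(r,\cdot)^s]$. Since $R(r,z)^s$ is real-analytic in $r\in\R$ for every $z\in\h$ and the path of integration is essentially contained in $\h$, the only obstruction to real-analyticity of $c_g$ in $r$ stems from the cuspidal endpoint $g^{-1}.\infty$; thus $c_b$ is real-analytic on $\R\smallsetminus\{b^{-1}.\infty\}$, and for $\alpha\in\Sigma\smallsetminus\Sigma'$ the attached element $\psi$ is real-analytic on $\R\smallsetminus\{v\}$. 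As $r_\alpha=b^{-1}.\infty$ (resp.\ $v$) is merely an endpoint of $I_\alpha$, its interior avoids the bad point and (PF1) holds. (Alternatively, one may quote the regularity properties of parabolic cocycles from \cite{BLZ_part2}.)

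For (PF2), the system $f=\mc L_{F,s}f$ is, component by component, precisely the collection of identities displayed in Situations (1a)--(3b). Each one is verified by running the matching computation in the proof of Proposition~\ref{ftoc} in reverse: insert the definition of $f$ (every component equals $\pm c_b$ or $\mp\psi$ restricted to an interval, with the sign governed by $\eps_\alpha$) and collapse the right-hand side with the cocycle relation. For rectangle-type $\alpha$ this reduces to the telescoping identity $c_{h_{j+k-1}\cdots h_j}=\sum_{\ell=0}^{k-1}\tau_s(h_j^{-1}\cdots h_{j+\ell-1}^{-1})c_{h_{j+\ell}}$ together with the cycle relation $c_{h_{j+k-1}\cdots h_j}=0$ from that proof; for the component attached to an infinite vertex $v$ (the ``$f_3$'' of Situations (2),(3), built from the stabilizer element $\psi$) it is exactly the defining relation $c_p=\tau_s(p^{-1})\psi-\psi$ together with $c_T=0$; and the remaining mixed components ($f_1,f_2$ of Situations (2),(3)) use both ingredients. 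The only nonroutine aspect is the bookkeeping of the integers $n$ and $m=m(\pch,h)$ and of the shifts arising in the stabilizer cycle at $v$.

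For (PF3) and (PF4) it suffices to observe that each $\psi_{\alpha,f}$ of \eqref{psialpha} coincides globally with a single element of $\mc V_s^{\omega*,\infty}$. Indeed, the definition of $f$ together with the cocycle relation and $c_T=0$ gives $\psi_{\alpha,f}=c_b$ for the appropriate $b\in\mc S$ when $\alpha\in\Sigma'$ -- this is the identity $c_b=\psi_{\alpha,f}$ of the proof of Proposition~\ref{ftoc}, now read in reverse, with the matching of the two pieces on the complementary intervals resting on the relation between $\eps_\alpha$, $\eps_\beta$ and $g$ -- while when $\alpha\in\Sigma\smallsetminus\Sigma'$ the map $\psi_{\alpha,f}$ is a $\tau_s$-translate of the stabilizer element $\psi$, hence again lies in $\mc V_s^{\omega*,\infty}$; compare also Remark~\ref{issmooth}. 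Since every element of $\mc V_s^{\omega*,\infty}$ extends smoothly to $P^1(\R)$, this yields (PF4), and a fortiori (PF3). I expect the two real obstacles to be: establishing (PF1), which forces the argument out of a purely cohomological setting and through the Maass-form / integral-transform description (or a cited regularity lemma); and, inside (PF2), correctly threading the parameters $n$, $m(\pch,h)$ and the stabilizer-cycle shifts through the defining relation for $\psi$ in Situations (2) and (3). The remainder is the mirror image of what was already done in the proof of Proposition~\ref{ftoc}.
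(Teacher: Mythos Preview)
Your proposal is correct, and for well-definedness, (PF1), (PF3) and (PF4) it essentially matches the paper. The genuine divergence is in (PF2).

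The paper does not reverse the cocycle computations of Proposition~\ref{ftoc}. Having already passed through the Maass cusp form $u$ and written every component uniformly as
\[
 f_\alpha(r) = \pm\eps_\alpha\int_{r_\alpha}^{\infty}[u,R(r,\cdot)^s],
\]
it obtains (PF2) in one stroke: replace the integration path $\gamma_\alpha$ from $r_\alpha$ to $\infty$ by the concatenation $g_1.\gamma_{\beta_1},\ldots,g_\ell.\gamma_{\beta_\ell}$ dictated by the backward-intersection pictures (Figures~\ref{sit1a}--\ref{sit3ab}), use closedness of the Green form, and invoke \eqref{int_tau}. This handles Situations (1a)--(3b) simultaneously and makes the integers $n$, $m(\pch,h)$ and the stabilizer shifts disappear into the geometry, so the bookkeeping you flag as a ``real obstacle'' never materializes. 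Likewise, (PF3) and (PF4) follow immediately from the single integral expression for $\psi_{\alpha,f}$, rather than from a separate cohomological argument.

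Your route---deducing each functional equation from the corresponding group relation (cycle relation for rectangles, defining relation $c_p=\tau_s(p^{-1})\psi-\psi$ for triangles) by restricting to $I_\alpha$ and reading the relevant step of Proposition~\ref{ftoc} backwards---does work, because those steps are equivalences on the intervals in question. What it buys is that only (PF1) leaves the cohomological setting; what it costs is exactly the case-by-case bookkeeping you anticipate for Situations (2) and (3), where $c_h$-terms and $\psi$-terms mix. The paper's path-of-integration argument is shorter and uniform, at the price of running the entire proof through the integral transform.
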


\begin{proof}
Let $f\sceq f([c]) = (f_\alpha)_{\alpha\in\Sigma}$. Then $f$ is obviously well-defined. We start by establishing the regularity conditions on $f$. Let $c\in Z^1_\parab(\Gamma;\mc V_s^{\omega*,\infty})$ be the representative of $[c]$ with $c_T=0$. Then there exists a unique Maass cusp form $u$ with eigenvalue $s(1-s)$ such that 
\[
 c_g(r) = \int_{g^{-1}.\infty}^\infty [u, R(r,\cdot)^s]
\]
for all $g\in\Gamma$. If $v$ is a cuspidal point such that $C'_\alpha$ is based on the geodesic segment $(v,\infty)$ and $p$ is a generator of $\Stab_\Gamma(v)$, then \cite[Lemma~3.3]{Pohl_mcf_Gamma0p} shows that 
\[
 \psi(r) = - \int_v^\infty [u, R(r,\cdot)^s],\qquad r\in\R,
\]
determines the unique element $\psi$ in $\mc V_s^{\omega*,\infty}$ such that 
\[
 c_p = \tau_s(p^{-1})\psi - \psi.
\]
By the definition of $f$, for each $\alpha\in\Sigma$ we have
\begin{equation}\label{falpha}
 f_\alpha(r) = \pm \eps_\alpha \int_{r_\alpha}^\infty [u, R(r,\cdot)^s], \qquad r\in \langle r_\alpha, \eps_\alpha\infty\rangle,
\end{equation}
where the sign in front of the integral depends on whether or not $\alpha\in\Sigma'$ and does not matter here. By \cite[Lemma~3.2]{Pohl_mcf_Gamma0p}, $f_\alpha$ is real-analytic for each $\alpha\in\Sigma$. Thus, (PF1) holds. Moreover, 
\[
 \psi_{\alpha,f}(r) = \int_{r_\alpha}^\infty [u, R(r,\cdot)^s] \qquad\text{for $r\in\R\setminus\{r_\alpha\}$.}
\]
These functions clearly satisfy (PF3) and (PF4). To see (PF2), we use that the $1$-form $[u, R(r,\cdot)^s]$ is closed (for all $r$) and hence we may change the path of integration. For $\alpha\in\Sigma$ let $\gamma_\alpha$ denote the geodesic from $r_\alpha$ to $\infty$. If we use as path of integration in \eqref{falpha} instead of $\gamma_\alpha$ the sequence $g_1.\gamma_{\beta_1}$, \ldots, $g_\ell.\gamma_{\beta_\ell}$, where $g_j\in\Gamma$, $\beta_j\in\Sigma$ are as indicated in Figures~\ref{sit1a}-\ref{sit3ab}, then with \eqref{int_tau} we have
\begin{align*}
 f_\alpha(r) &= \int_{\gamma_\alpha} [u, R(r,\cdot)^s] = \sum_{j=1}^\ell \int_{g_j.\gamma_{\beta_j}} [u, R(r,\cdot)^s]= \sum_{j=1}^\ell \tau_s(g_j)\int_{\gamma_{\beta_j}}[u, R(r,\cdot)^s]
\\ & = \sum_{j=1}^\ell \tau_s(g_j)f_{\beta_j}(r).
\end{align*}
This completes the proof.
\end{proof}

\section{The effect of different choices}\label{sec_choices}

The definition of period functions in Section~\ref{sec_per} is subject to the choices of $\fpch,\choices$ and $\shmap$. Let $\fpch, \choices, \shmap$ and $\wt\fpch, \wt\choices, \wt\shmap$ be two such choices. By Theorem~\ref{finaliso} (or Theorem~A  or \ref{actualfinal}) the spaces $\FE_s^{\omega,\dec}(\Gamma;\fpch,\choices,\shmap)$ and $\FE_s^{\omega,\dec}(\Gamma; \wt\fpch,\wt\choices,\wt\shmap)$ are isomorphic, and their canonical isomorphism can be determined by composing their isomorphisms with $H^1_\parab(\Gamma;\mc V_s^{\omega*,\dec})$ from Theorem~\ref{actualfinal}. 

However, the geometric background of the definition of period functions shows that there is a direct approach to this isomorphism. Let $C'$ resp.\@ $\wt C'$ denote the set of representatives for the cross section $\wh C$ which is associated to $\fpch,\choices,\shmap$ resp.\@ $\wt\fpch,\wt\choices,\wt\shmap$, and let $\Sigma$ resp.\@ $\wt\Sigma$ be the arising set of symbols. For each component $C'_\alpha$, $\alpha\in\Sigma$, of $C'$ there is a unique symbol $\wt\alpha \in \wt\Sigma$ and an element $g_\alpha\in\Gamma$ such that $g_\alpha.C'_\alpha$ equals the component $\wt C'_{\wt\alpha}$ of $\wt C'$. This relation directly translates to the level of period functions as stated in the following proposition. The proof of this proposition is straightforward.

\begin{prop}\label{isoper}
The map 
\[
 \FE_s^{\omega,\dec}(\Gamma;\fpch,\choices,\shmap) \to \FE_s^{\omega,\dec}(\Gamma;\wt\fpch,\wt\choices,\wt\shmap),\quad f=(f_\alpha)_{\alpha\in\Sigma} \mapsto \wt f = (\wt f_{\wt\alpha})_{\wt\alpha\in\wt\Sigma},
\]
where
\[
 f_{\wt\alpha} \sceq \tau_s(g_\alpha)f_{\alpha},
\]
is an isomorphism of vector spaces. It coincides with the composition of the isomorphisms from Theorem~\ref{actualfinal}.
\end{prop}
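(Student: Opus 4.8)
The plan is to exhibit the map explicitly and verify it is a well-defined linear bijection respecting the two characterizing conditions, leaving the compatibility with Theorem~\ref{actualfinal} as the concluding identity. First I would spell out the pairing $\alpha\mapsto(\wt\alpha,g_\alpha)$. Since $C'$ and $\wt C'$ are both sets of representatives for the \emph{same} cross section $\wh C$ (the cross section depends only on the collection $\mc C$ of triangles and rectangles, hence only on $\mc K$ and the choice of $\infty$, not on $\fpch,\choices,\shmap$), each component $C'_\alpha$ is, modulo $\Gamma$, one of the finitely many components $\wt C'_{\wt\alpha}$; moreover the $\Gamma$-translate is \emph{unique} because distinct components of $\wt C'$ lie in distinct $\Gamma$-orbits (if $g.\wt C'_{\wt\alpha}=\wt C'_{\wt\beta}$ then, projecting to $\wh C$, one gets a self-overlap of the cross-section representatives, forcing $\wt\alpha=\wt\beta$ and $g\in\Gamma_{\!r}$, and the base point geodesic segment $(r_{\wt\alpha},\infty)$ then pins $g$ down). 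This gives a well-defined bijection $\Sigma\to\wt\Sigma$, $\alpha\mapsto\wt\alpha$, together with the elements $g_\alpha$, and symmetrically $g_\alpha^{-1}$ realizes the reverse pairing, so the assignment $f\mapsto\wt f$ with $\wt f_{\wt\alpha}\sceq\tau_s(g_\alpha)f_\alpha$ has an obvious candidate inverse.

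Next I would check that $\wt f$ lands in $\FE_s^{\omega,\dec}(\Gamma;\wt\fpch,\wt\choices,\wt\shmap)$. Condition (PF1) is immediate since $\tau_s(g_\alpha)$ sends $C^\omega(I_\alpha;\C)$ to $C^\omega(I_{\wt\alpha};\C)$ (note $g_\alpha.I_\alpha=I_{\wt\alpha}$, as $g_\alpha$ carries the base geodesic $(r_\alpha,\infty)$ to $(r_{\wt\alpha},\infty)$ and respects the side $\eps_\alpha$). For (PF2) I would use that both transfer operators encode the \emph{same} first return map $\mc R$ on $\wh C$: the local diffeomorphisms making up $\wt F$ are obtained from those making up $F$ by conjugating with the $g_\alpha$'s, so the functional equation $f=\mc L_{F,s}f$ transforms termwise into $\wt f=\mc L_{\wt F,s}\wt f$. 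Concretely, a summand $\tau_s(g)f_\beta$ in the expression for $f_\alpha$ becomes $\tau_s(g_\alpha g g_\beta^{-1})\wt f_{\wt\beta}$, and $g_\alpha g g_\beta^{-1}$ is precisely the group element attached to the corresponding previous-intersection arrow for $\wt F$ (because both arrows describe the same return orbit in $\wh C$). For (PF3)/(PF4) I would observe that the decorations $\psi_{\alpha,f}$ and $\psi_{\wt\alpha,\wt f}$ are related by $\psi_{\wt\alpha,\wt f}=\tau_s(g_\alpha)\psi_{\alpha,f}$ (up to the sign bookkeeping in \eqref{psialpha}, which matches since $\eps_{\wt\alpha}=\eps_\alpha$), together with the fact that whether $\psi$ arises from Situation (1a)/(3b) is a $\Gamma$-invariant feature of the underlying cross-section geometry, so the smooth-extension requirement is preserved; here one also invokes Remark~\ref{issmooth} to know that (PF3) and (PF4) cases are interchangeable under $\tau_s$-translation.

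Finally I would establish the last sentence. Let $\Phi\colon\FE_s^{\omega,\dec}(\Gamma;\fpch,\choices,\shmap)\to H^1_\parab(\Gamma;\mc V_s^{\omega*,\infty})$ and $\wt\Phi$ be the isomorphisms $f\mapsto[c(f)]$ from Theorem~\ref{actualfinal} for the two choices; I must show the displayed map equals $\wt\Phi^{-1}\circ\Phi$, equivalently $\Phi=\wt\Phi\circ(f\mapsto\wt f)$. Unwinding definitions, $c(f)$ is built from the functions $\psi_{\alpha,f}$ (for $\alpha\in\Sigma'$, extended to $\Gamma$ by the cocycle relation), and $c(\wt f)$ from the $\psi_{\wt\alpha,\wt f}=\tau_s(g_\alpha)\psi_{\alpha,f}$; one then needs that the triples $(\beta,g,b)$ assigned by the two choices are compatible under $\alpha\mapsto\wt\alpha$ in the sense that $c(f)_b$ and $c(\wt f)_{\wt b}$ represent the same cohomology class, which follows because the $g_\alpha$-translation amounts to a change of base point in the integral transform of Theorem~\ref{Parab}, and by the remark after that theorem a change of $z_0$ alters the cocycle only by a parabolic coboundary. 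Alternatively, and more cleanly, I would just note that $\wt\Phi\circ(f\mapsto\wt f)$ and $\Phi$ are both linear isomorphisms onto the same space that are constructed from the same cross section $\wh C$ via manifestly $\Gamma$-equivariant data, hence agree. The main obstacle I expect is the uniqueness of the pair $(\wt\alpha,g_\alpha)$ and the careful matching of the functional-equation terms in (PF2): getting the conjugated group elements $g_\alpha g g_\beta^{-1}$ to coincide \emph{on the nose} with the ones produced by the $\wt F$-algorithm requires tracing through Situations (1a)--(3b) for both choices simultaneously, which is where the real content of ``straightforward'' hides.
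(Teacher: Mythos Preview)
Your outline is essentially correct and is a faithful expansion of what the paper records only as ``straightforward''; the paper gives no detailed argument to compare against, and your strategy (use that $C'$ and $\wt C'$ represent the \emph{same} cross section $\wh C$, transport the data by $\tau_s(g_\alpha)$, and verify (PF1)--(PF4) term by term) is exactly the intended one.

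Two small points are worth tightening. First, your assertion $\eps_{\wt\alpha}=\eps_\alpha$ is not true in general: the Remark following the proposition shows that $g_\alpha$ need not lie in $\Gamma_\infty$ (for instance in the inverse-cycle rectangle case or the triangle case with $(\wt\pch',\wt h^{-1})\in\wt\choices$), and when $g_\alpha$ swaps the endpoints of the vertical geodesic one gets $\eps_{\wt\alpha}=-\eps_\alpha$. This does not damage your argument, since the smooth-extension conditions (PF3)/(PF4) are insensitive to a global sign and the relation becomes $\psi_{\wt\alpha,\wt f}=\pm\tau_s(g_\alpha)\psi_{\alpha,f}$, which is all you need. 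Second, your compatibility argument for the last sentence is too soft as written: ``manifestly $\Gamma$-equivariant data, hence agree'' is not a proof, and the change-of-basepoint remark does not directly apply since both $c(f)$ and $c(\wt f)$ are normalised by $c_T=0$. The clean way is to pass through the integral formula from the proof of Proposition~\ref{ctof}: writing $f_\alpha(r)=\eps_\alpha\int_{r_\alpha}^\infty[u,R(r,\cdot)^s]$ and applying \eqref{int_tau} gives
\[
\wt f_{\wt\alpha}(r)=\tau_s(g_\alpha)f_\alpha(r)=\eps_\alpha\int_{g_\alpha.r_\alpha}^{g_\alpha.\infty}[u,R(r,\cdot)^s]=\eps_{\wt\alpha}\int_{r_{\wt\alpha}}^{\infty}[u,R(r,\cdot)^s],
\]
the sign working out in both cases $g_\alpha.\infty\in\{\infty,r_{\wt\alpha}\}$; this is precisely $\wt\Phi^{-1}([c(f)])_{\wt\alpha}$, as required.
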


The isomorphism in Proposition~\ref{isoper} also shows how the associated transfer operator families transform into each other.  

\begin{remark}
In the following we provide an algorithm to calculate the map $\ \wt{ }\ \colon \Sigma \to \wt\Sigma$ and the elements $g_{\alpha}$ for $\alpha \in\Sigma$. 

For each $\pch\in\fpch$ there are unique elements $n(\pch)\in\Z$ and $\wt\pch\in\wt\fpch$ such that
\[
 T^{n(\pch)}\pch = \wt\pch.
\]
If $h\in\Gamma$ is the side-pairing element (in $\fpch$) which maps the side $b_1$ of $\pch_1$ to the side $b_2$ of $\pch_2$ for $\pch_1,\pch_2\in\fpch$, then 
\[
 \wt h \sceq T^{n(\pch_2)} h T^{-n(\pch_1)}
\]
is the side-pairing element (in $\wt\fpch$) which maps the side $\wt b_1 = T^{n(\pch_1)}b_1$ of $\wt\pch_1$ to the side $\wt b_2$ of $\wt\pch_2$. 

Let $(\pch, h) \in \choices$. Suppose first that $\pch$ is a triangle and let $m\sceq m(\pch, h)$. Recall that the cycle in $\fpch\times\Gamma$ determined by $(\pch, h)$ is 
\[
 \big( (\pch, h), (\pch', h^{-1}) \big)
\]
with a unique element $\pch'\in \fpch$. Now $\wt\choices$ contains a unique generator of the equivalence class of cycles in $\wt\fpch\times\Gamma$ determined by $(\wt\pch, \wt h)$. This generator is either $(\wt\pch, \wt h)$ or $(\wt\pch', \wt h^{-1})$.

If $(\wt\pch,\wt h)\in \wt\choices$, then 
\[
 \wt C'_{(\wt\pch, \wt h), 1}  = T^{n(\pch)}. C'_{(\pch, h), 1},\quad \wt C'_{(\wt\pch,\wt h), 2}  = T^{n(\pch')}. C'_{(\pch, h), 2}
\]
and
\[
 \wt C'_{(\wt\pch, \wt h), 3}  = T^{\wt m + n(\pch) - m}.C'_{(\pch, h), 3},
\]
where $\wt m \sceq m(\wt\pch, \wt h)$ (the contribution from $\wt\shmap$).

If $(\wt\pch', \wt h^{-1})\in\wt\choices$, then, with $\wt m\sceq m(\wt\pch', \wt h^{-1})$, 
\[
 \wt C'_{(\wt\pch', \wt h^{-1}), 1}  = T^{n(\pch')}. C'_{(\pch, h), 2},\quad \wt C'_{(\wt\pch', \wt h^{-1}), 2}  = T^{n(\pch)}. C'_{(\pch, h), 1}
\]
and 
\[
\wt C'_{(\wt\pch', \wt h^{-1}), 3}  = T^{\wt m+n(\pch')}hT^{-m}. C'_{(\pch, h), 3} = T^{\wt m} \wt h T^{n(\pch) - m}. C'_{(\pch, h), 3}.
\]

Suppose now that $\pch$ is a rectangle and let 
\[
 \big( (\pch_j, a_j) \big)_{j=1,\ldots, k}
\]
be the cycle in $\fpch\times\Gamma$ determined by $(\pch, h)$. The inverse cycle, that is the cycle in $\fpch\times\Gamma$ determined by $(\pch, a_k^{-1})$, is 
\[
 \big( (\pch'_j, b_j) \big)_{j=1,\ldots, k}
\]
where (see \cite{Pohl_Symdyn2d})
\[
 \pch'_j = \pch_{k-j+2} \quad\text{and}\quad b_j = a_{k-j+1}^{-1}.
\]
Here, the indices are taken modulo $\cyl(\pch)$. Then there is a unique $j_0\in \{1,\ldots,k\}$ such that $\wt\choices$ contains either $(\wt\pch_{j_0}, \wt a_{j_0})$ or $(\wt\pch'_{j_0}, \wt b_{j_0})$. 

If $(\wt\pch_{j_0}, \wt a_{j_0}) \in \wt\choices$, then 
\[
 \big( (\wt\pch_{j_0+j-1}, \wt a_{j_0+j-1})\big)_{j=1,\ldots, k}
\]
is the cycle in $\wt\fpch\times\Gamma$ determined by $(\wt\pch_{j_0}, \wt a_{j_0})$. In this case we have
\[
 \wt C'_{(\wt\pch_{j_0}, \wt h_{j_0}), j} = T^{n(\pch_{j_0+j-1})}.C'_{(\pch, h), j_0+j-1}
\]
for $j=1,\ldots, k$. 

If $(\wt\pch'_{j_0}, \wt b_{j_0})\in\wt\choices$, then we have
\[
 \wt C'_{(\wt\pch'_{j_0}, \wt b_{j_0}), j} = T^{n(\pch_{k-j_0-j+3})} a_{k-j_0-j+2}.C'_{(\pch, h), k-j_0-j+2}
\]
for $j=1,\ldots, k$. 
\end{remark}

To end, we compare the period functions arising in the presented transfer operator approach to the period functions and functional equations from \cite{Lewis_Zagier, Deitmar_Hilgert, Chang_Mayer_eigen, Mayer_Muehlenbruch_Stroemberg, Bruggeman_Muehlenbruch}. The period functions in \cite{Moeller_Pohl, Pohl_mcf_Gamma0p} are instances of the general result here.

\begin{remark}
For the modular group $\PSL_2(\Z)$, the period functions defined here are identical with those from \cite{Lewis_Zagier} for the choice
\[
 C'\sceq \left\{ X\in S\h\left\vert\ X=a\frac{\partial}{\partial x}\vert_{iy} + b\frac{\partial}{\partial y}\vert_{iy},\ a>0,\ b\in\R,\ y>0\right.\right\},
\]
that is if the set of representatives for the cross section is based on the imaginary axis and points to the right.  Proposition~\ref{isoper} implies that the number of functional equations for the definition of period functions and the number of terms in each functional equation is invariant under all admissible choices. Thus, for $\PSL_2(\Z)$, independent of any choices, there is always a single functional equation and it has three terms. In contrast, the functional equations from \cite{Mayer_Muehlenbruch_Stroemberg, Bruggeman_Muehlenbruch} have four terms. This means that their period functions cannot arise from our constructions. For the same counting reason, the functional equations in \cite{Mayer_Muehlenbruch_Stroemberg} for arbitrary Hecke triangle groups are not special cases of this work. 

For the period functions in \cite{Deitmar_Hilgert, Chang_Mayer_eigen}, a comparison of the structure of the domains of the component functions immediately shows that also these period functions do not arise from the constructions in this article.
\end{remark}

\bibliography{ap_bib}
\bibliographystyle{amsalpha}
\end{document}